\theoremstyle{plain}
\newtheorem{thm}{Theorem}[section]
\newtheorem{theorem}[thm]{Theorem}
\newtheorem*{theoremA}{Theorem A}
\newtheorem*{theoremB}{Theorem B}
\newtheorem{lemma}[thm]{Lemma}
\newtheorem{proposition}[thm]{Proposition}
\theoremstyle{definition}
\newtheorem{definition}[thm]{Definition}
\newtheorem{remark}[thm]{Remark}
\newtheorem{example}[thm]{Example}
\newtheorem{question}[thm]{Question}
\newtheorem{conjecture}[thm]{Conjecture}
\renewcommand\phi{\varphi}
\renewcommand\ge{\geqslant}
\renewcommand\le{\leqslant}
\renewcommand\leq{\leqslant}
\newcommand\be{\begin{eqnarray*}}
\newcommand\ee{\end{eqnarray*}}
\newcommand\Q{\mathbb Q}
\newcommand\R{\mathbb R}
\newcommand\C{\mathbb C}
\newcommand\Z{\mathbb Z}
\newcommand\N{\mathbb N}
\renewcommand\P{\mathbb P}
\newcommand\calo{{\mathcal O}}
\newcommand\newop[2]{\def#1{\mathop{\rm #2}\nolimits}}
\newop\log{log}
\newop\clconv{clconv}
\newop\cl{cl}
\newop\ord{ord}
\newop\Gal{Gal}
\newop\SL{SL}
\newop\Bl{Bl}
\newop\mult{mult}
\newop\mass{mass}
\newop\div{div}
\newop\codim{codim}
\newop\inte{int}
\newcommand\eqnref[1]{(\ref{#1})}
\newcommand\D{\Delta}
\newcommand\cF{\mathcal{F}}
\newcommand\rr{\mathrm{rat.rk}}
\newcommand\trdeg{\mathrm{tr.deg}}
\newcommand\nuf{\nu_{\mathrm{flag}}}
\newcommand\fa{\mathfrak{a}}
\newcommand\cO{\mathcal{O}}
\DeclareMathOperator{\vol}{vol}
\newcommand{\ie}{{\rm i.e.\ }}
\newcommand{\e}{\varepsilon}
\newcommand{\la}{\lambda}
\begin{document}

\title{Vanishing sequences and Okounkov bodies}

\author{S\'ebastien Boucksom, Alex K\"uronya, Catriona Maclean, Tomasz Szemberg}
\date{\today}
\maketitle
\thispagestyle{empty}

\begin{abstract} We define and study the vanishing sequence along a real valuation of sections of a line bundle on a normal projective variety. Building on previous work of the first author with Huayi Chen, we prove an equidistribution result for vanishing sequences of large powers of a big line bundle, and study the limit measure; in particular, the latter is described in terms of restricted volumes for divisorial valuations. We also show on an example that the associated concave function on the Okounkov body can be discontinuous at boundary points.
\end{abstract}



\section{Introduction}
The aim of this paper is to define and study a natural higher dimensional generalization of the classical notion of 'vanishing sequence' in the theory of algebraic curves. Our approach builds on that of \cite{BC}, which studied fairly general filtrations of section rings ; the current paper can be viewed as a detailed study of a special class of filtrations induced by valuations. More general filtrations are in turn closely related to the so-called 'test configurations' in Donaldson's definition of $K$-stability  \cite{Don2,Nys10,Szek}.

\medskip

We work over an algebraically closed field $k$ of arbitrary characteristic. If $L$ is a line bundle on a smooth projective curve $X$ with $H^0(L)\ne 0$, the \emph{vanishing sequence} of $H^0(L)$ at a point $p\in X$ is classically defined as the set
$$
a_{\min}(L,p)=a_1(L,p)<...<a_N(L,p)=a_{\max}(L,p)
$$
of vanishing orders at $p$ of non-zero sections of $L$ (see for instance \cite[p.256]{HM}). The valuation $v:=\ord_p$ defines a decreasing, real filtration
\begin{equation}\label{equ:filt}
\cF^t_v H^0(L):=\left\{s\in H^0(L)\mid v(s)\ge t\right\}\,\,(t\in\R),
\end{equation}
and we then have
\begin{equation}\label{equ:aj}
a_j(L,p)=\inf\left\{t\in\R\mid\codim\cF^t_v H^0(L)\ge j\right\}
\end{equation}
for $j=1,...,N$, and hence $N=h^0(X,L)$.

Using the trivial bound $a_{\max}(mL,p)\le m\deg L$, it is easy to see that the scaled version $(m^{-1}a_j(mL,p))_j$ of the vanishing sequence at $p$ of $H^0(mL)$ equidistributes as $m\to\infty$ with respect to the Lebesgue measure on the interval $[0,\deg L]\subset\R$.

\medskip

If $X$ is now a normal projective variety of arbitrary dimension and $L$ is a line bundle with $H^0(L)\ne 0$, the filtration (\ref{equ:filt}) makes sense for any real valuation $v$ on $X$, and we use (\ref{equ:aj}) to \emph{define} the vanishing sequence
$$
a_{\min}(L,v)=a_1(L,v)\le ...\le a_N(L,v)=a_{\max}(L,v)
$$
of $H^0(L)$ along $v$, again with $N=h^0(L)$. As a set, it coincides with the set of values of $v$ on non-zero sections of $L$, but this time repetitions may occur (unless $v$ has transcendence degree zero, see Lemma \ref{lem:distinct} below).

Assuming that $L$ is big, so that $H^0(mL)\ne 0$ for all $m\gg 1$, a simple subadditivity argument proves the existence of
$$
a_{\min}(\|L\|,v):=\lim_{m\to \infty} m^{-1}a_{\min}(mL,v)\in[0,+\infty)
$$
and
$$
a_{\max}(\|L\|,v):=\lim_{m\to\infty} m^{-1}a_{\max}(mL,v)\in(0,+\infty].
$$
The first invariant $a_{\min}(\|L\|,v)$ coincides by definition with the asymptotic invariant $v(\|L\|)$ as defined in \cite{AIBL}. In particular, it is non-zero iff the center of $v$ on $X$ lies in the non-nef locus $\mathbb{B}_-(L)$.

We say that $v$ has \emph{linear growth} when $a_{\max}(\|L\|,v)$ is finite, \ie when the values of $v$ on sections in $H^0(mL)$ grow at most linearly with $m$. This condition is easily seen to be independent of the choice of the big line bundle $L$ and of $X$ in its birational class; it is thus an intrinsic property of the valuation $v$ on the function field $K$ of $X$.

We prove that any divisorial valuation (and, more generally, any Abhyankar valuation) has linear growth. For a valuation $v$ centered at a closed point, we prove that $v$ has linear growth iff it has volume zero in the sense of \cite{ELS,LM,C2} (cf. Theorem \ref{thm:vol}).

Our first main result describes the asymptotic behavior of the vanishing sequence along $v$ of $H^0(mL)$ as $m\to\infty$.

\begin{theoremA} Let $L$ be a big line bundle on a normal projective variety $X$, and set $N_m:=h^0(mL)$.
\begin{itemize}
\item[(i)] For any real valuation $v$ on $X$, the scaled vanishing sequence
$$
\left(m^{-1} a_j(mL,v)\right)_{1\le j\le N_m}
$$
equidistributes as $m\to\infty$, in the sense that the sequence of discrete probability measures
$$
\nu_k:=\frac{1}{N_m}\sum_j\delta_{m^{-1}a_j(mL,v)}
$$
converges weakly to a positive measure $\mu_{L,v}$ on $\R$.

\item[(ii)] If $v$ has linear growth, then $\mu_{L,v}$ is a probability measure supported on the interval $[a_{\min}(\|L\|,v),a_{\max}(\|L\|,v)]$, and its singular part with respect to the Lebesgue measure consists of at most a Dirac mass at $a_{\max}(\|L\|,v)$.

\item[(iii)] When $v$ doesn't have linear growth we have $\mu_{L,v}=0$.

\end{itemize}
\end{theoremA}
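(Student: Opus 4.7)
The plan is to realize the vanishing sequence $(a_j(mL,v))_{1\le j\le N_m}$ as the sequence of jumps of the multiplicative filtration $\cF^\bullet_v$ on the section ring $R(L)=\bigoplus_{m\ge 0}H^0(mL)$, and then invoke the equidistribution machinery for graded filtrations developed in \cite{BC}. Multiplicativity of $\cF^\bullet_v$ with respect to the grading is immediate from $v(ss')=v(s)+v(s')$, and by \eqnref{equ:aj} the measure $\nu_m$ is exactly the $(1/m)$-rescaled jump measure of the filtered space $(H^0(mL),\cF^\bullet_v)$.

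For part (ii), the linear growth hypothesis gives $a_{\min}(mL,v)/m\ge 0$ and $a_{\max}(mL,v)/m\le C$ uniformly in $m$, so $\cF^\bullet_v$ is linearly bounded in the sense of \cite{BC}. Their main equidistribution theorem then yields the weak convergence $\nu_m\rightharpoonup\mu_{L,v}$, with $\mu_{L,v}$ realized as the pushforward of normalized Lebesgue measure on the Okounkov body $\Delta(L)$ (for any admissible flag) under the concave transform $G:\Delta(L)\to[a_{\min}(\|L\|,v),a_{\max}(\|L\|,v)]$. The statement on the singular part then follows from the standard fact that a concave function on a convex body can have a level set of positive Lebesgue measure only at its maximum value: a flat spot of $G$ at some $c<\max G$ would, by 1-dimensional concavity along any line to a point where $G>c$, force the same inequality to fail immediately past the flat interval, a contradiction; similarly, the minimum of $G$ is attained on the boundary of $\Delta(L)$, which has Lebesgue measure zero. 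Hence $G_*\mathrm{Leb}$ is absolutely continuous away from $a_{\max}(\|L\|,v)$, where at most a single Dirac mass may appear.

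Parts (i) and (iii) I would handle together by a truncation argument. For each $T>a_{\min}(\|L\|,v)$, define the capped multiplicative filtration
\[
\cF^{\le T,\,t}_v H^0(mL):=\cF^{\min(t,mT)}_v H^0(mL),
\]
which remains multiplicative because $t\le mT$ and $t'\le m'T$ give $t+t'\le(m+m')T$. This capped filtration is linearly bounded, so part (ii) produces a probability measure $\mu^T$ supported on $[a_{\min}(\|L\|,v),T]$ as the weak limit of its scaled jump measures $\nu_m^{\le T}$. By construction, $\nu_m^{\le T}$ agrees with $\nu_m$ on $(-\infty,T)$ and collapses the tail $[T,\infty)$ into a single Dirac at $T$; hence the restrictions $\mu^T|_{(-\infty,T)}$ are compatible as $T$ grows and together define a Radon measure $\mu_{L,v}$ on $\R$ with $\nu_m\rightharpoonup\mu_{L,v}$, establishing part (i). Moreover $\mu_{L,v}(\R)=1-\lim_{T\to\infty}\mu^T(\{T\})$.

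The main obstacle is part (iii): showing $\mu_{L,v}=0$ when $v$ has no linear growth amounts to proving $\mu^T(\{T\})=\lim_m \dim(\cF^{mT}_vH^0(mL))/N_m\to 1$ as $T\to\infty$. The naive bound via injection into the local ring at the center $p$ of $v$, $\dim H^0(mL)/\cF^{mT}_vH^0(mL)\le \dim(\cO_{X,p}/\fa_{mT}(v))$, only yields a ratio of order $\vol(v)\,T^n/\vol(L)$, which does not tend to zero for fixed $T$. I would instead pass to the Okounkov-body picture and argue that failure of linear growth forces the concave transform $G$ attached to $\cF^\bullet_v$ to equal $+\infty$ Lebesgue-almost-everywhere on $\Delta(L)$; by concavity of $G$, it then suffices to exhibit a single interior point of $\Delta(L)$ where $G=+\infty$, which I would extract from Theorem \ref{thm:vol}, combining the positivity of the ELS volume of $v$ with a leading-term analysis in the Okounkov construction to upgrade "some section with super-linear $v$-value" to "leading term in the interior of $\Delta(L)$".
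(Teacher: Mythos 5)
Your treatment of part (ii) is essentially the same as the paper's remark that, in the linearly bounded case, the statement is a special case of \cite{BC}; the argument that the singular part can only sit at the maximum is morally correct, but as written it only rules out \emph{atoms} at $c<a_{\max}(\|L\|,v)$, not singular continuous mass. To actually get absolute continuity away from $a_{\max}(\|L\|,v)$ you need the stronger fact that $t\mapsto\mathrm{Leb}\{G\ge t\}^{1/n}$ is concave (Brunn--Minkowski applied to the superlevel sets $\{G\ge t\}$, which satisfy $\lambda\{G\ge t\}+(1-\lambda)\{G\ge t'\}\subset\{G\ge\lambda t+(1-\lambda)t'\}$), hence locally Lipschitz below $a_{\max}$. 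This is exactly what the paper does with $h(t)^{1/n}=\bigl(\vol(L,v\ge t)/\vol(L)\bigr)^{1/n}$ in Lemma \ref{lem:vol}.

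Your truncation construction for part (i) is a workable alternative to the paper's argument, but it is more elaborate than needed: the paper simply establishes pointwise convergence $h_m(t)\to h(t)$ for $t\neq a_{\max}(\|L\|,v)$ via Lemma \ref{lem:vol}, upgrades to $L^1_{\mathrm{loc}}$ by dominated convergence, and differentiates. This handles (i) and (iii) simultaneously with no case distinction and no passage to Okounkov bodies.

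The real gap is part (iii), which you yourself flag as the obstacle. The approach you sketch cannot work in the stated generality: Theorem \ref{thm:vol} only characterizes linear growth for valuations centered at a \emph{closed point}, whereas (iii) must hold for an arbitrary real valuation, whose center on every model may be a higher-dimensional scheme point. Moreover, when $v$ has no linear growth the relevant volume $\vol_X(v)$ (when it even makes sense) is \emph{zero}, not positive, so ``positivity of the ELS volume'' is not available; and the claim that the concave transform equals $+\infty$ a.e.\ is precisely what needs proof, not a starting point. The paper's argument here is much more elementary and avoids the Okounkov body entirely: when $a_{\max}(\|L\|,v)=+\infty$, the function $h^{1/n}$ (with $h(t)=\vol(L,v\ge t)/\vol(L)$) is concave, non-increasing, and bounded below by $0$ on all of $\R$; a concave function on $\R$ that is bounded below must be constant, so $h$ is constant, and therefore $\mu_{L,v}=-h'=0$. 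You should replace your sketch of (iii) with this observation, which also removes the need for the truncation device in (i).
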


When the base field $k$ has characteristic $0$ and $v$ is Abhyankar, we prove more precisely that $\mu_{L,v}$ is absolutely continuous with respect to the Lebesgue measure, \ie that no Dirac mass can occur at $a_{\max}(\|L\|,v)$. When $v$ is divisorial, we can even provide an explicit formula for the density of $\mu_{L,v}$ in terms of restricted volumes \cite{ELMNP2}, using the differentiability property of the volume function proved in \cite{BFJ,LM}.

\medskip

When $v$ has linear growth on $L$,  Theorem A turns out to be a special case of the main result of \cite{BC}, which also provides a description of the limit measure $\mu_{L,v}$ as the push-forward of the normalized Lebesgue measure on the Okounkov body $\D(L)\subset\R^n$ (with respect to any given flag of subvarieties, see \cite{LM,KK,Bou}) by a concave non-negative usc function
$$
G_{L,v}:\D(L)\to\R_+,
$$
the \emph{concave transform} of $v$ on $\D(L)$. Note that $\D(L)$ and $G_{L,v}$ will depend on the reference flag in general, while the image measure $\mu_{L,v}$ does not. Just like the Okounkov body itself, we prove that $G_{L,v}$ only depends on the numerical equivalence class of $L$. Since it is concave and usc, a simple result from convex analysis guarantees that $G_{L,v}$  is continuous up to the boundary of $\D(L)$ when the latter is a polytope. Our second main result shows that continuity may fail in general when $\D(L)$ has a more erratic boundary:

\begin{theoremB} For an appropriate choice of flag on the blow-up $X$ of $\P^3$ along an adequate smooth curve $C$, there exists an ample prime divisor $D$ on $X$ such that $G_{D,\ord_D}$ is not continuous up to the boundary of $\D(D)$.
\end{theoremB}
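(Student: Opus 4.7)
The plan is to reduce $G_{D,\ord_D}$ to a Minkowski-gauge function on $\Delta_{Y_\bullet}(D)$, and then exhibit an example where the Okounkov body has a curved piece of boundary through the origin, which produces the discontinuity.

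\textbf{Step 1: reduction to a Minkowski gauge.} For $L=D$ and $v=\ord_D$, a section $s\in H^0(mD)$ satisfies $\ord_D(s)\geq k$ iff $s=s_D^k\cdot s'$ with $s'\in H^0((m-k)D)$, where $s_D\in H^0(\cO_X(D))$ is the canonical section cutting out $D$. Choose the flag $Y_\bullet$ transverse to $D$ (no $Y_i$ contained in $D$): then $\nu_{Y_\bullet}(s_D)=0$, so $\nu_{Y_\bullet}(s_D^k s')=\nu_{Y_\bullet}(s')$. Setting $k=\lceil mt\rceil$ and passing to Okounkov bodies, the sub-linear series governing the super-level sets of $G_{D,\ord_D}$ has Okounkov body $(1-t)\Delta_{Y_\bullet}(D)$, so
$$
\bigl\{\xi\in\Delta_{Y_\bullet}(D)\,:\,G_{D,\ord_D}(\xi)\geq t\bigr\}\;=\;(1-t)\,\Delta_{Y_\bullet}(D)\qquad(t\in[0,1]).
$$
Equivalently $G_{D,\ord_D}(\xi)=1-g_{\Delta_{Y_\bullet}(D)}(\xi)$, where $g_K(\xi):=\inf\{s>0:\xi\in sK\}$ is the Minkowski gauge of $K$ at the origin. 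Since $\Delta_{Y_\bullet}(D)\subset\R^n_{\geq 0}$ contains $0$, the origin lies on $\partial\Delta_{Y_\bullet}(D)$.

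\textbf{Step 2: geometric source of the discontinuity.} The gauge of a convex body $K$ with $0\in\partial K$ is discontinuous at the origin whenever $\partial K$ has a smooth (non-polyhedral) piece through $0$. The archetype is the half-disc $K=\{(x,y)\in\R^2_{\geq 0}:(x-1)^2+y^2\leq 1\}$, for which $g_K(x,y)=(x^2+y^2)/(2x)$: this tends to $0$ as $(x,y)\to 0$ along the $x$-axis but to $1$ as $(x,y)\to 0$ along the arc. Accordingly, $G_{D,\ord_D}$ will fail to be continuous at $0\in\partial\Delta_{Y_\bullet}(D)$ as soon as $\Delta_{Y_\bullet}(D)$ has such a curved boundary piece through the origin.

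\textbf{Step 3: construction of the example.} Take $X=\Bl_C\P^3$ for a smooth curve $C\subset\P^3$ chosen so that, after restriction to a suitable smooth surface $Y_1\subset X$, the Zariski decomposition of a ray of divisor classes depends non-polyhedrally on its parameter --- such irrational Zariski-type behavior is known to occur on blow-ups since Cutkosky's examples, and similar pathologies underlie known non-polyhedral Okounkov bodies on higher-dimensional blowups. Pick $D=aH-bE$ (with $H=\pi^*\cO_{\P^3}(1)$ and $E$ the exceptional divisor) ample, with $(a,b)$ tuned so that a general member of $|D|$ is smooth and irreducible (hence prime) and so that the ray $D-tY_1$ meets the non-polyhedral region. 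Choose a generic flag $Y_\bullet=(Y_1,Y_2,\{p\})$ transverse to $D$. The Lazarsfeld--Musta\c{t}\u{a} slicing formula describes $\Delta_{Y_\bullet}(D)$ as the union over $t\in[0,\mu(D,Y_1)]$ of the slices $\{t\}\times\Delta((D-tY_1)|_{Y_1})$, and the non-polyhedral Zariski behavior of the ray $(D-tY_1)|_{Y_1}$ in $Y_1$ then produces a curved piece of $\partial\Delta_{Y_\bullet}(D)$ passing through the origin.

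\textbf{Main obstacle.} The delicate step is to arrange simultaneously that (i) $D$ is both ample and prime; and (ii) the non-polyhedral portion of $\partial\Delta_{Y_\bullet}(D)$ really passes through (or arbitrarily close to) the origin, so that the Minkowski gauge is visibly discontinuous at $0$. This couples the choice of $C$ (degree, position), of $(a,b)$, and of flag, and reduces --- via iterated application of the slicing formula --- to an explicit intersection-theoretic computation on $X$ and on the surface $Y_1$, with the curved boundary arising from an algebraic but non-rational relation in the Zariski decomposition of the restricted ray.
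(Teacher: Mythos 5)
Your Step 1 is a correct special case of the paper's Lemma \ref{lem:notcont}: for a big prime divisor $D$, $G_{D,\ord_D}$ is the \emph{extremal function} of $\D(D)$ at the base point $p=\nuf(D)$, i.e.\ $\{G_{D,\ord_D}\ge t\}=tp+(1-t)\D(D)$. When the flag is transverse to $D$ you get $p=0$ and your Minkowski-gauge formula. The problem is where you then place the discontinuity.

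\textbf{The gap.} You want a curved boundary arc of $\D(D)$ passing through the origin. But for an \emph{ample} line bundle $D$ the Okounkov body is locally conical at the origin: in the setting of Lemma \ref{lem:oknef}, $\D(D)$ near $\{x_1\le a\}$ is cut out by the conditions $D|_{Y_i}-x_1Y_1|_{Y_i}-\cdots-x_{i+1}Y_{i+1}\in\mathrm{Nef}(Y_i)$, and since each $D|_{Y_i}$ is ample, all these constraints are strictly satisfied at $x=0$ and hence inactive in a neighborhood of $0$. Thus $\D(D)$ coincides with $\R_+^n$ near $0$ and is conical there, so $G_{D,\ord_D}=1-g_{\D(D)}$ \emph{is} continuous at the origin. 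Your Step 2/3 picture of a curved boundary through $0$ coming from Zariski decomposition is likewise wrong for the same reason: near the origin the restricted classes are ample and the Zariski decomposition is trivial. Non-polyhedral behavior only shows up near the boundary of the big/nef cone, i.e.\ away from $0$.

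\textbf{What the paper does instead.} It deliberately makes the flag \emph{non}-transverse to $D$: it chooses $Y_2$ to be a component of $D|_{Y_1}$ with multiplicity $1$, so that $p=\nuf(D)=(0,1,0)\ne 0$. At that boundary point the quadric constraint $q(x)=(L|_{Y_1}-x_1Y_1|_{Y_1}-x_2Y_2)^2\ge 0$ from Lemma \ref{lem:oknef} becomes active (conditions (i)--(iii) arrange $q(p)=0$ with nonvanishing gradient), giving a genuinely curved boundary piece through $p$, hence non-conicality and discontinuity of the extremal function there. It also needs Lemma \ref{lem:irred} to guarantee $D$ prime, which you flagged as a difficulty but did not resolve. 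So the misplacement of the base point at the origin, rather than at $\nuf(D)$ on the curved part of the boundary, is a genuine obstruction to your approach.
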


\paragraph{Acknowledgments.} We are grateful to Bo Berndtsson, Lawrence Ein, Patrick Graf, Daniel Greb, and Rob Lazarsfeld for helpful discussions.

During this project, S\'ebastien Boucksom was partially supported by the ANR projects MACK and POSITIVE. Alex K\"uronya  was supported in part by  the DFG-Forscher\-grup\-pe 790 ``Classification of Algebraic Surfaces and Compact Complex
Manifolds'', and the OTKA Grants 77476 and  81203 by the Hungarian Academy of Sciences. Tomasz Szemberg's research was partly supported by NCN grant
UMO-2011/01/B/ ST1/04875. Catriona Maclean was supported by  the ANR project CLASS. Part of this work was done while the second author was visiting the Uniwersytet Pedagogiczny in Cracow, and while the second and third
authors were visiting the Universit\'e Pierre et Marie Curie in Paris. We would like to use this
opportunity to thank Anreas H\"oring for the invitation to UPMC and  both institutions  for the excellent working conditions.

\section{Vanishing sequences}
We work over an algebraically closed field $k$, whose characteristic is arbitrary unless otherwise specified. An \emph{algebraic variety} is by definition an integral $k$-scheme of finite type.
\subsection{Real valuations}
We use \cite[Chapter VI]{ZS} and \cite{ELS} as general references on valuations. A \emph{real valuation} $v$ on an algebraic variety $X$ is a valuation on the function field $K$ of $X$, trivial on $k$, with values in the ordered group $(\R,+)$ (\ie of real rank $1$) and admitting a center on $X$. By definition, the latter is a scheme point $c_X(v)\in X$ such that $v\ge 0$ on the local ring at $c_X(v)$ and $v>0$ on its maximal ideal. By the valuative criterion of separatedness, this condition uniquely determines $c_X(v)$, while its existence is guaranteed when $X$ is proper, by the valuative criterion of properness.

The \emph{rational rank} $\rr(v)$ is defined as the maximal number of $\Q$-linearly independent elements in the value group $v(K^*)\subset\R$. The \emph{transcendence degree} $\trdeg(v)$ is defined as the transcendence degree over $k$ of the residue field
$$
k(v):=\{v\ge 0\}/\{v>0\},
$$
and can alternatively be described as the maximal possible dimension of the (closure of the) center of $v$ on a birational model of $X$. The Abhyankar-Zariski inequality states that
$$
\rr(v)+\trdeg(v)\le\dim X,
$$
and an \emph{Abhyankar valuation} is by definition a valuation $v$ for which equality holds.

By the main result of \cite{KnKu},  Abhyankar valuations can be more explicitely characterized as \emph{quasimonomial valuations}, \ie those valuations that become monomial on a birational model of $X$ (see also \cite[Proposition 2.8]{ELS} for a simple proof in characteristic zero). More precisely, $v$ is quasimonomial iff there exists a birational model $X'$ of $X$, proper over $X$ and non-singular at $\xi=c_{X'}(v)$, and a regular system of parameters $(z_1,...,z_r)$ at $\xi'$ (with $r=\rr(v)$, necessarily) such that $v$ is given as a monomial valuation
$$
v\left(\sum_{\alpha\in\N^n}a_\alpha z^\alpha\right)=\min\left\{\sum_i c_i\alpha_i\mid a_\alpha\ne 0\right\}
$$
on the formal completion $\widehat{\mathcal{O}}_{X',\xi}\simeq k(\xi')[[z_1,...,z_r]]$, for some $\Q$-linearly independent weights $c_1,...,c_r\in\R_+$.

In particular, the value group of an Abhyankar valuation is finitely generated (and hence a free abelian group), in stark contrast with more general valuations: according to \cite[p.102]{ZS}, \emph{any} subgroup of $(\Q,+)$ can be realized as the value group of a real valuation on $X=\mathbb{P}^2$.

As an important special case, an Abhyankar valuation $v$ with $\rr(v)=1$ is the same thing as a \emph{divisorial valuation}, \ie a valuation of the form $v=c\,\ord_E$ with $c>0$ and $E\subset X'$ a prime divisor on a birational model $X'$ of $X$, proper over $X$.

At the other end of the spectrum, a valuation $v$ \emph{of maximal rational rank} (\ie such that $\rr(v)=\dim X$) also is an Abhyankar valuation. Its center on every model is then a closed point, \ie $\trdeg(v)=0$, and this property easily implies that
\begin{equation}\label{equ:dim}
\#v\left(E\setminus\{0\}\right)=\dim E
\end{equation}
for every finite dimensional subspace $E\subset K$ (see for instance \cite[Proposition 2.23]{Bou}).

The following simple consequence of the above description of Abhyankar valuations will come in handy later on.
\begin{lemma}\label{lem:dom} If $v$ is an Abhyankar valuation on $X$, then there exists a divisorial valuation $v'$ such that $v\le v'$ on the local ring of $X$ at $c_X(v)$.
\end{lemma}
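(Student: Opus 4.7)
The plan is to invoke the quasimonomial characterization of Abhyankar valuations recalled just above and then replace the weights by rational numbers to produce a divisorial majorant. After passing to a proper birational model $X'$ of $X$ on which $v$ becomes monomial in a regular system of parameters $z_1,\dots,z_r$ at $\xi:=c_{X'}(v)$ (with $r=\rr(v)$), $v$ is given on $\widehat{\cO}_{X',\xi}\simeq k(\xi)[[z_1,\dots,z_r]]$ by $v(\sum a_\alpha z^\alpha)=\min\{\sum_i c_i\alpha_i:a_\alpha\neq 0\}$ for positive, $\Q$-linearly independent weights $c_1,\dots,c_r$. Since $X'\to X$ is proper, $c_X(v)$ is the image of $\xi$, and $\cO_{X,c_X(v)}\subset\cO_{X',\xi}\subset\widehat{\cO}_{X',\xi}$, so it suffices to verify the inequality on $\widehat{\cO}_{X',\xi}$.

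Next, I would choose rationals $c'_1,\dots,c'_r$ with $c'_i\ge c_i$ and let $v'$ be the monomial valuation in the same parameters with weights $c'_i$. Since every $c'_i$ lies in $\Q$, the value group of $v'$ lies in $\Q$, so $\rr(v')=1$; being quasimonomial, $v'$ is itself Abhyankar, and therefore $\trdeg(v')=\dim X-1$. An Abhyankar valuation of rational rank one is divisorial --- its center on some birational model is a prime divisor $E$ and $v'$ is a positive rational multiple of $\ord_E$ --- so $v'$ is divisorial, as required.

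Finally, for any $f\in\cO_{X,c_X(v)}$ with completed expansion $f=\sum_\alpha a_\alpha z^\alpha$, termwise comparison using $\alpha_i\ge 0$ yields $\sum_i c'_i\alpha_i\ge\sum_i c_i\alpha_i$ for every $\alpha$ with $a_\alpha\neq 0$, whence $v'(f)\ge v(f)$ after taking minima. The argument is mostly bookkeeping once the quasimonomial structure is in hand; the one point requiring thought is the identification of $v'$ as divisorial, which can alternatively be made concrete by clearing denominators of the $c'_i$ and realizing $v'$ as the order of vanishing along the exceptional divisor of a suitable weighted blow-up of $X'$ at $\xi$.
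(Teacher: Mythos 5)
Your proof is correct and follows essentially the same argument as the paper's: pass to a model where $v$ is monomial, raise the weights to rational numbers $c'_i\ge c_i$, observe the resulting monomial valuation $v'$ is Abhyankar of rational rank one (hence divisorial), and compare termwise. You spell out a few details the paper leaves implicit (why $\rr(v')=1$, the termwise comparison, and the alternative weighted blow-up description), but the route is the same.
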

\begin{proof} As recalled above, there exists a proper birational morphism $\pi:X'\to X$ which is smooth at $\xi':=c_{X'}(v)$ and a regular system of parameters $(z_1,...,z_r)$ at $\xi'$ with respect to which $v$ is monomial. Setting $c_i:=v(z_i)$ we pick rational numbers $c_i'\ge c_i$ and denote by $v'$ the corresponding monomial valuation. Then $v'$ is divisorial since it is Abhyankar with $\rr(v)=1$, and we have $v\le v'$ on $\cO_{X',\xi'}$, hence also on $\cO_{X,c_X(v)}$.
\end{proof}

\subsection{The vanishing sequence along a valuation}
We assume from now on that $X$ is a normal projective variety, and let $v$ be a real valuation on $X$. For each line bundle $L$ on $X$ and each non-zero section $s\in H^0(L)$, we can make sense of $v(s)\in[0,+\infty)$ by trivializing $L$ near the center $c_X(v)$, which identifies $s$ with a local regular function. Since any two local trivializations of $L$ differ by a unit, this is well-defined, and the usual property
$$
v(s+s')\ge\min\left\{v(s),v(s')\right\}
$$
is satisfied for any two sections $s,s'\in H^0(L)$ (with the usual convention that $v(0)=+\infty$). As a consequence, the function
$$
v:H^0(L)\to[0,+\infty]
$$
is uniquely determined by the corresponding (decreasing, real) filtration $\cF_v$ of $H^0(L)$ by linear subspaces, defined by
$$
\mathcal F^t_v H^0(L):=\left\{s\in H^0(L)\mid v(s)\ge t\right\}
$$
for all $t\in\R$.

\begin{definition}\label{def:vanishing sequence} Let $L$ be a line bundle on $X$ such that $N:=h^0(L)$ is non-zero. The \emph{vanishing sequence along $v$ of $H^0(L)$} is the sequence
$$
a_{\min}(L,v)=a_1(L,v)\le...\le a_{N}(L,v)=a_{\max}(L,v)
$$
defined by
$$
a_j(L,v)=\inf\left\{t\in\R\mid\codim\cF^t_v H^0(L)\ge j\right\}
$$
for $j=1,...,N$.
\end{definition}

\begin{remark} In \cite[Definition 1.2]{BC}, the \emph{jumping numbers} of the filtration $\cF_v$ are defined as
$$
e_j=\sup\left\{t\in\R\mid\dim\cF^t_v H^0(L)\le j\right\}.
$$
They relate to the vanishing sequence by $e_j=a_{N-j}$.
\end{remark}
As a set, the vanishing sequence coincides with $v\left(H^0(L)\setminus\{0\}\right)\subset\R_+$, with $a_{\min}(L,v)$ and $a_{\max}(L,v)$ being respectively the smallest and largest value taken by $v$ on a non-zero section of $L$. But there will be repetitions in general, counted in such a way that the basic formula
\begin{equation}\label{equ:der}
-\frac{d}{dt}\dim\cF^t_v H^0(L)=\sum_{j=1}^{N}\delta_{a_j(L,v)}
\end{equation}
holds as distributions on $\R$ (compare \cite[(1.3)]{BC}). We note:

\begin{lemma}\label{lem:distinct} If the real valuation $v$ has transcendence degree $0$ (in particular, if $v$ has maximal rational rank), then the vanishing sequence along $v$ of $H^0(L)$ admits no repetition, \ie $a_i(L,v)<a_j(L,v)$ for $i<j$.
\end{lemma}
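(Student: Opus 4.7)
The plan is to show the equivalent statement that each graded piece of the filtration $\cF_v$ on $H^0(L)$ is at most one--dimensional over $k$. By formula \eqnref{equ:der}, a repetition $a_i(L,v)=a_j(L,v)$ with $i<j$ corresponds exactly to a real number $t$ where the graded piece
$$
G^t \deq \cF^t_v H^0(L)\,/\,\cF^{>t}_v H^0(L)
$$
has $\dim_k G^t\ge 2$, so it suffices to prove $\dim_k G^t\le 1$ for all $t$ under the hypothesis $\trdeg(v)=0$.

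First I would fix a real number $t$ with $G^t\ne 0$ and pick a reference section $s_0\in H^0(L)$ with $v(s_0)=t$. After trivializing $L$ on an open neighborhood of the center $c_X(v)$, every $s\in H^0(L)$ becomes a local regular function, and the ratio $s/s_0$ is then a well--defined element of the function field $K$ (independent of the chosen trivialization, since any two trivializations differ by a unit). For $s\in\cF^t_v H^0(L)$ we have $v(s/s_0)=v(s)-t\ge 0$, so $s/s_0$ lies in the valuation ring $\cO_v$, and we get a $k$--linear map
$$
\cF^t_v H^0(L)\longto k(v),\qquad s\longmapsto \overline{s/s_0}.
$$
Its kernel consists of those $s$ with $v(s/s_0)>0$, i.e. $v(s)>t$, which is exactly $\cF^{>t}_v H^0(L)$. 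Therefore $G^t$ embeds $k$--linearly into the residue field $k(v)$.

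The second ingredient is that $\trdeg(v)=0$ means $k(v)$ is algebraic over $k$. Since $k$ was assumed to be algebraically closed from the outset, this forces $k(v)=k$, and consequently $\dim_k G^t\le\dim_k k(v)=1$, which finishes the proof. For the parenthetical remark about maximal rational rank, one observes that $\rr(v)=\dim X$ combined with the Abhyankar--Zariski inequality forces $\trdeg(v)=0$, so this case is automatic.

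I do not foresee a real obstacle: the argument is essentially bookkeeping once one recognizes that over an algebraically closed base field the residue field of a valuation of transcendence degree zero must coincide with $k$. The only minor point to verify carefully is that the map $s\mapsto s/s_0$ is well defined on sections, which follows from the fact that transition functions between local trivializations of $L$ are units in $\cO_{X,c_X(v)}$ and therefore lie in the kernel of the reduction map to $k(v)$.
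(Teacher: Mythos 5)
Your proof is correct and is essentially the paper's argument with the citation unpacked: the paper simply invokes \cite[Proposition 2.23]{Bou}, whose content (each graded piece of $\cF_v$ embeds into the residue field $k(v)$, and $\trdeg(v)=0$ plus $k$ algebraically closed forces $k(v)=k$) is exactly what you prove directly. Your self-contained version is a welcome expansion, and the reduction via \eqnref{equ:der} to the statement $\dim_k G^t\le 1$ together with the well-definedness check for $s/s_0$ are both handled carefully.
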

\begin{proof} As mentioned above, a valuation $v$ with transcendence degree $0$ satisfies (\ref{equ:dim}) for any finite dimensional linear space $E$ of rational functions, see \cite[Proposition 2.23]{Bou}. In particular, we have
$$
\# v\left(H^0(L)\setminus\{0\}\right)=h^0(L),
$$
which implies that the vanishing sequence along $v$ of $H^0(L)$ admits no repetion.
\end{proof}

Finally, we record the following birational invariance property of vanishing sequences:
\begin{lemma}\label{lem:birinv} If $\pi:X'\to X$ is a birational morphism between normal projective varieties and $L$ is a line bundle on $X$ with $H^0(L)\ne 0$, then we have for each real valuation $v$
$$
a_j(\pi^*L,v)=a_j(L,v)\text{  for  }j=1,...,N.
$$
\end{lemma}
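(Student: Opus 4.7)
The plan is to show that pullback by $\pi$ identifies the filtrations $\cF_v^t H^0(L)$ and $\cF_v^t H^0(\pi^* L)$, from which the equality of vanishing sequences follows immediately via Definition~\ref{def:vanishing sequence}.

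First I would observe that $\pi^*\colon H^0(X,L)\to H^0(X',\pi^*L)$ is a linear isomorphism. Indeed, since $X$ is normal and $\pi$ is birational and proper, Zariski's main theorem (or, equivalently, the usual consequence of normality for birational proper maps) gives $\pi_*\cO_{X'}=\cO_X$, and then the projection formula yields $\pi_*\pi^*L\simeq L\otimes\pi_*\cO_{X'}\simeq L$, whence $H^0(X',\pi^*L)\simeq H^0(X,L)$.

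Next I would check that $v(\pi^* s)=v(s)$ for every non-zero $s\in H^0(L)$. Since $\pi$ is birational, the function fields of $X$ and $X'$ are canonically identified, and the center $c_{X'}(v)$ maps to $c_X(v)$ under $\pi$ (this is the functorial behaviour of centres: the local ring $\cO_{X,c_X(v)}$ is dominated by $\cO_{X',c_{X'}(v)}$, and both are dominated by the valuation ring of $v$). Pick a local trivialization $e$ of $L$ on an open neighborhood $U$ of $c_X(v)$ and write $s=f\cdot e$ with $f\in\cO_{X,c_X(v)}$; then $\pi^*e$ is a local trivialization of $\pi^*L$ on $\pi^{-1}(U)\ni c_{X'}(v)$, and $\pi^*s=(f\circ\pi)\cdot\pi^*e$. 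Because $f\circ\pi$ is literally the same rational function as $f$ under the identification of function fields, we obtain $v(\pi^*s)=v(f\circ\pi)=v(f)=v(s)$.

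Combining these two steps, the isomorphism $\pi^*$ sends $\cF_v^t H^0(L)$ bijectively onto $\cF_v^t H^0(\pi^*L)$ for every $t\in\R$, so the two filtrations have identical codimension functions. The formula
\[
a_j(L,v)=\inf\st{t\in\R\mid\codim\cF^t_v H^0(L)\ge j}
\]
then gives $a_j(\pi^*L,v)=a_j(L,v)$ for all $j$. The only subtlety in the argument is the verification that $\pi$ maps $c_{X'}(v)$ to $c_X(v)$ so that a local trivialization upstairs is obtained by pulling back one downstairs; this is routine from the valuative criterion and the domination relations between the relevant local rings.
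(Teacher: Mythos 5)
Your proof is correct and follows essentially the same route as the paper: both use $\pi_*\cO_{X'}=\cO_X$ (normality plus birationality) and the projection formula to identify $H^0(X,L)$ with $H^0(X',\pi^*L)$, and then observe that this identification is compatible with the filtrations $\cF^t_v$. The paper's proof is terser and leaves the verification that $v(\pi^*s)=v(s)$ (via the compatibility of centers and local trivializations under $\pi$) implicit, whereas you spell it out; otherwise the arguments coincide.
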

\begin{proof}  We have $\pi_*\cO_{X'}=\cO_X$ since $\pi$ is birational and $X$ is normal, and the projection formula therefore shows that $\pi^*$ induces an isomorphism
$$
\cF^t_v H^0(L)\simeq\cF^t_v H^0(\pi^*L)
$$
for all $t\in\R$.
\end{proof}

\subsection{Linear growth and the volume}
Given any two line bundles $L,L'$ and sections $s,s'$ of $L,L'$ respectively, we plainly have
$$
v(s\otimes s')=v(s)+v(s').
$$
This yields in particular the super and subadditivity properties
$$
a_{\max}((m+m')L,v)\ge a_{\max}(mL,v)+a_{\max}(m'L,v)
$$
and
$$
a_{\min}((m+m')L,v)\le a_{\min}(mL,v)+a_{\min}(m'L,v)
$$
for all $m,m'\in\N$ such that $H^0(mL)$ and $H^0(m'L)$ are non-zero. By the so-called 'Fekete lemma', we infer:

\begin{lemma}\label{lem:subadd} If $L$ is a big line bundle, then $m^{-1}a_{\max}(mL,v)$ and $m^{-1}a_{\min}(mL,v)$ admit limits $a_{\max}(\|L\|,v)\in(0,+\infty]$ and $a_{\min}(\|L\|,v)\in[0,+\infty)$ as $m\to\infty$. In fact, we have
$$
a_{\max}(\|L\|,v)=\sup_{m\ge m_0}m^{-1}a_{\max}(mL,v)
$$
and
$$
a_{\min}(\|L\|,v)=\inf_{m\ge m_0}m^{-1}a_{\min}(mL,v)
$$
for any choice of $m_0\ge 1$ such that $H^0(mL)\ne 0$ for $m\ge m_0$.
\end{lemma}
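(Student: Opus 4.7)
The plan is to combine Fekete's lemma, applied to the super/subadditive sequences recorded just above the statement, with a short dimension count giving the strict positivity of $a_{\max}(\|L\|,v)$. Bigness of $L$ supplies an $m_0\ge 1$ with $H^0(mL)\ne 0$ for all $m\ge m_0$, so both $g(m):=a_{\min}(mL,v)$ and $f(m):=a_{\max}(mL,v)$ are defined and non-negative throughout $m\ge m_0$, and the two super/subadditivity inequalities hold on this range.

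Next I would apply Fekete. For the subadditive $g$, the standard form gives $\lim_m m^{-1}g(m)=\inf_{m\ge m_0}m^{-1}g(m)\in[0,+\infty)$, finiteness being automatic since any nonzero section has finite $v$-value. For the superadditive $f$, the corresponding version (allowing the value $+\infty$) yields $\lim_m m^{-1}f(m)=\sup_{m\ge m_0}m^{-1}f(m)\in[0,+\infty]$. This already establishes the two displayed formulae for $a_{\min}(\|L\|,v)$ and $a_{\max}(\|L\|,v)$.

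It remains to verify that this supremum is strictly positive, which is where the only real geometric content of the lemma lies. Let $Z$ be the reduced Zariski closure of $c_X(v)$; since a real valuation is by definition nontrivial on $K$, its center cannot be the generic point of $X$, and hence $\dim Z < n:=\dim X$. Bigness gives $h^0(mL)\sim m^n\vol(L)/n!$, whereas $h^0(Z,mL|_Z)=O(m^{\dim Z})$ by standard bounds, so for $m$ large the restriction map $H^0(X,mL)\to H^0(Z,mL|_Z)$ has nonzero kernel. Any $s\ne 0$ in that kernel represents, through a local trivialization of $L$ near $c_X(v)$, an element of $\mathfrak{m}_{X,c_X(v)}$ --- because a local regular function vanishes at the scheme point $c_X(v)$ iff it vanishes on $Z$ near $c_X(v)$ --- and by the very definition of the center this forces $v(s)>0$. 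Hence $a_{\max}(\|L\|,v)\ge m^{-1}v(s)>0$. The main (and only) obstacle is this positivity step; everything else is formal bookkeeping around Fekete's lemma.
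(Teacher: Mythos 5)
Your proof is correct and follows the same route as the paper, which likewise deduces the lemma from the super/subadditivity inequalities recorded just above it together with Fekete's lemma (the paper's proof is essentially the single phrase ``By the so-called `Fekete lemma', we infer''). The one thing you do that the paper does not is spell out why $a_{\max}(\|L\|,v)$ is strictly positive: since Fekete only yields $\sup_{m\ge m_0}m^{-1}a_{\max}(mL,v)\in[0,+\infty]$, one needs a section $s\in H^0(mL)\setminus\{0\}$ with $v(s)>0$ for some $m$, and your dimension count (comparing $h^0(mL)\sim m^n$ against $h^0(Z,mL|_Z)=O(m^{\dim Z})$ for $Z=\overline{c_X(v)}\subsetneq X$, then observing that vanishing on $Z$ near $c_X(v)$ means lying in $\mathfrak m_{X,c_X(v)}$, whence $v>0$) correctly supplies this; the paper leaves it implicit. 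This is a worthwhile addition rather than a different method.
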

\begin{remark} Subadditivity of the smallest jumping number can fail for general multiplicative filtrations on the algebra of sections
$$
R(L):=\bigoplus_{m\in\N}H^0(mL),
$$
as considered in \cite{BC}. What is special with $\cF_v$ is the mutiplicativity of the corresponding norm with respect to the trivial valuation of $k$ on the algebra $R(L)$.
\end{remark}

In the notation of \cite[\S 2]{AIBL}, we have
$$
a_{\min}(\|L\|,v)=v(\|L\|)
$$
We thus get
$$
a_{\min}(\|L\|,v)>0\Longrightarrow c_X(v)\in\mathbb{B}_-(L),
$$
where the right-hand side denotes the \emph{restricted base locus} (aka \emph{non-nef locus}). In particular, $a_{\min}(\|L\|,v)$ is always zero when $L$ is nef.

 The converse implication holds at least when $X$ is smooth and $k$ has characteristic $0$, by \cite[Proposition 2.8]{AIBL}.
\begin{lemma}\label{lem:indep} If $a_{\max}(\|L\|,v)$ is finite for a given big line bundle on a given normal projective variety $X$, then $a_{\max}(\|L'\|,v)$ is also finite for any big line bundle on any normal projective variety $X'$ birational to $X$.
\end{lemma}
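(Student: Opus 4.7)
The plan is to split the proof into two independent steps: first, show on a single normal projective variety that linear growth is independent of the choice of big line bundle; second, use Lemma \ref{lem:birinv} to transfer finiteness between birational models via a common resolution.

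For the first step, fix big line bundles $L$ and $L'$ on $X$ with $a_{\max}(\|L\|,v) < \infty$. Kodaira's lemma lets us write $pL \equiv A + F$ with $A$ ample and $F$ effective for some $p\ge 1$; picking $q\gg 1$ so that $qA - L'$ has a section, we obtain an integer $k=pq$ and a non-zero section $s_E \in H^0(kL - L')$. For every $m$ with $H^0(mL') \ne 0$, multiplication by $s_E^{\otimes m}$ gives an injection $H^0(mL') \hookrightarrow H^0(mkL)$. For any non-zero $s \in H^0(mL')$,
$$
v(s \otimes s_E^{\otimes m}) = v(s) + m\,v(s_E) \ge v(s),
$$
so taking $s$ achieving the maximum on the left-hand factor yields
$$
a_{\max}(mL',v) + m\,v(s_E) \;\le\; a_{\max}(mkL,v).
$$
Dividing by $m$ and invoking Lemma \ref{lem:subadd} to pass to the limit gives $a_{\max}(\|L'\|,v) \le k\cdot a_{\max}(\|L\|,v) < \infty$, as required.

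For the second step, given normal projective varieties $X$ and $X'$ with the same function field $K$, pick a normal projective variety $Y$ together with birational morphisms $\pi\colon Y\to X$ and $\pi'\colon Y\to X'$ (for example, the normalization of the graph closure). Lemma \ref{lem:birinv} applied to each $m$ yields $a_{\max}(mL,v) = a_{\max}(m\pi^*L,v)$ and dividing by $m$ and passing to the supremum from Lemma \ref{lem:subadd} gives $a_{\max}(\|L\|,v) = a_{\max}(\|\pi^*L\|,v)$; likewise for $L'$ via $\pi'$. Since pullbacks of big line bundles under birational morphisms remain big, finiteness transfers from $(X,L)$ to $(Y,\pi^*L)$, then across to $(Y,\pi'^{*}L')$ by the first step applied on $Y$, and finally back down to $(X',L')$.

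The only genuine content is the first step; the main (mild) obstacle there is producing the section $s_E \in H^0(kL - L')$, which is standard bigness-of-$L$ plus absorbing $L'$ into a large multiple of an ample class. Everything else—the multiplicativity $v(s \otimes s_E^{\otimes m}) = v(s) + m\,v(s_E)$ and the Fekete-type limits—is already in place from the preceding sections.
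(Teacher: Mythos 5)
Your proof is correct and takes essentially the same approach as the paper: reduce via Lemma \ref{lem:birinv} to a single variety, use bigness of $L$ to produce a section $s_E\in H^0(kL-L')$, inject $H^0(mL')\hookrightarrow H^0(mkL)$ by $s\mapsto s\otimes s_E^{\otimes m}$, and conclude $a_{\max}(mL',v)=O(m)$ via multiplicativity of $v$. The only differences are cosmetic (you perform the birational reduction second rather than first, and expand the existence of $s_E$ into a two-step Kodaira-lemma argument where the paper invokes it directly); the core injection-plus-Fekete idea is identical.
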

\begin{proof} By the birational invariance property of Lemma \ref{lem:birinv}, we may assume that $X'=X$. Since $L$ is big, there exists $a\gg 1$ and a non-zero section $\sigma\in H^0(aL-L')$, so that for each $m\in\N$ $H^0(mL')$ injects into $H^0(ka L)$ via $s\mapsto s\otimes\sigma^k$. It follows that
$$
a_{\max}(mL',v)\le a_{\max}(mL',v)+k v(\sigma)\le a_{\max}(ka L)=O(m),
$$
and hence $a_{\max}(\|L'\|,v)<+\infty$.
\end{proof}

We may thus introduce:
\begin{definition}\label{defi:lingrowth} A real valuation $v$ on the function field $K/k$ has \emph{linear growth} if $a_{\max}(\|L\|,v)$ is finite for some (hence any) big line bundle $L$ on some (hence any) normal projective model $X$ of $K$.
\end{definition}

Here is an equivalent formulation:
\begin{proposition}\label{prop:num} A real valuation $v$ on a normal projective variety $X$ has linear growth iff for each big numerical class $\alpha\in N^1(X)_\R$ we have
$$
\sup_{D\equiv\alpha}v(D)<+\infty,
$$
where $D$ ranges over all effective $\R$-Cartier divisors in the classe of $\alpha$ (and the value of $v$ on an $\R$-divisor is defined by linearity).
\end{proposition}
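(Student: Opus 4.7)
The direction ($\Leftarrow$) is essentially immediate. For a big line bundle $L$, every section $s\in H^0(mL)\setminus\{0\}$ gives $\divv(s)\in|mL|$, an effective integral divisor numerically equivalent to $mL$. By linearity of $v$ on divisor coefficients, $\sup_{D\equiv mL,\,D\geq 0}v(D)=m\sup_{D'\equiv L,\,D'\geq 0}v(D')$, so $a_{\max}(mL,v)/m$ is bounded above by the (finite, by hypothesis) quantity $\sup_{D'\equiv L,\,D'\geq 0}v(D')$, and Lemma \ref{lem:subadd} yields $a_{\max}(\|L\|,v)<\infty$, i.e.\ linear growth.

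For the direction ($\Rightarrow$), assume $v$ has linear growth and fix a big class $\alpha$. The plan is a sequence of reductions ending in a uniform argument over $\Pic^\tau(X)$. Using openness of the big cone in $N^1(X)_\R$, choose a rational big $\beta$ with $\beta-\alpha$ big, and represent $\beta-\alpha$ by an effective $\R$-divisor $E$. For any effective $D\equiv\alpha$, the divisor $D+E$ is effective with $D+E\equiv\beta$ and $v(D)\le v(D+E)$, so $\sup_{D\equiv\alpha}v(D)\le\sup_{D''\equiv\beta}v(D'')$. Writing $\beta=[L]/m$ for a big line bundle $L$ and scaling, the task reduces to bounding $\sup_{D\equiv L,\,D\geq 0}v(D)$. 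Since the set of effective $\R$-divisors numerically equivalent to $L$ with support in any given finite set of primes is a rational polyhedron on which $v$ is linear, its sup is $+\infty$ or attained at a rational vertex; clearing denominators $D\mapsto nD$, the problem becomes bounding $v(s)/n$ uniformly over $n\ge 1$, $P\in\Pic^\tau(X)$, and nonzero $s\in H^0(nL+P)$, where $P$ records the gap between linear and numerical equivalence.

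The main obstacle is the uniform control in $P\in\Pic^\tau$, which cannot be sidestepped since $\Pic^0(X)$ may be a positive-dimensional abelian variety. The key input would be the following lemma: \emph{there exists an integer $N_0$, depending only on $L$, such that $h^0(2nL+Q)\ge 1$ for every $n\ge N_0$ and every $Q\in\Pic^\tau(X)$}. This follows from noetherianity of the projective scheme $\Pic^\tau$: after replacing $L$ by a suitable multiple so that $h^0(2L)\ne 0$, the subsets $U_n=\{Q\in\Pic^\tau:h^0(2nL+Q)\ge 1\}$ are closed (upper semicontinuity of $h^0$), ascending in $n$ (tensoring with a fixed nonzero section of $2L$), and cover $\Pic^\tau$ since each $2nL+Q$ is big, so this chain stabilizes at $\Pic^\tau$ from some $N_0$ on. Applied with $Q=-P$, pick $\sigma\in H^0(2nL-P)\setminus\{0\}$; then $s\otimes\sigma\in H^0(3nL)$, so $v(s)+v(\sigma)\le a_{\max}(3nL,v)\le 3n\,a_{\max}(\|L\|,v)$, giving $v(s)/n\le 3\,a_{\max}(\|L\|,v)$ uniformly for $n\ge N_0$. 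The finitely many small values $n<N_0$ are absorbed by applying the same argument to $s^{N_0}\in H^0(N_0nL+N_0P)$, which lies in the regime $N_0n\ge N_0$ where the previous step applies.
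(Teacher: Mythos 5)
Your forward direction ($\Leftarrow$) and the reduction steps at the start of ($\Rightarrow$) are correct and match the paper's: reduce to a big line bundle $L$ via an auxiliary effective $\R$-divisor, then to integral effective divisors numerically equivalent to $mL$ via rationality of the polyhedron (this is the paper's Lemma~\ref{lem:conv}). The key lemma you isolate --- that there is an $N_0$ with $h^0(2nL+Q)\geq 1$ for all $n\geq N_0$ and all $Q\in\Pic^\tau(X)$ --- is also a true statement, and your deduction of the proposition from it is fine. The problem is your \emph{proof} of that lemma. You argue that the closed subsets $U_n=\{Q:h^0(2nL+Q)\geq 1\}$ are ascending and cover $\Pic^\tau$, and then conclude ``by noetherianity'' that the chain stabilizes. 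But Noetherianity is the \emph{descending} chain condition for closed subsets (equivalently, ACC for \emph{open} subsets); ascending chains of closed subsets in a Noetherian space need not stabilize. A standard counterexample: in $\mathbb{A}^1_k$ with $k$ countable and algebraically closed, enumerate the closed points as $a_1,a_2,\dots$ and set $Z_n=\{a_1,\dots,a_n\}$; this is an ascending chain of closed subsets whose union is all of $\mathbb{A}^1$, yet it never stabilizes. Over $\C$ one can salvage the conclusion by instead invoking uncountability (an irreducible variety over an uncountable field is not a countable union of proper closed subvarieties), but the paper works over an arbitrary algebraically closed field, including $\overline{\mathbb{F}_p}$ and $\overline{\mathbb{Q}}$, where this argument fails.

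The paper sidesteps the issue with a clean uniform bound that works over any field: by \cite[Lemma 2.2.42]{PAG} there is a very ample line bundle $A$ such that $A+N$ is very ample (in particular effective) for \emph{every} nef line bundle $N$. Since numerically trivial line bundles are nef, $A+(mL-D)$ has a nonzero section whenever $D\equiv mL$, and since $L$ is big, $mL-A$ is effective for $m\gg 0$. Tensoring the canonical section of $D$ with nonzero sections of $mL-A$ and of $A+mL-D$ produces a nonzero section $s$ of $2mL$ with $v(D)\leq v(s)=O(m)$. This replaces your covering-and-stabilization step with a single, base-field-independent effectivity statement and is the ingredient your write-up is missing; your lemma is in fact an immediate corollary of it (take $D=0$, $Q=$ the numerically trivial twist).
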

\begin{proof} One direction is clear, since $D:=\frac{1}{m}\div(s)$ is in particular an effective $\Q$-divisor in the numerical class of $L$ for each non-zero section $s\in H^0(mL)$.

Conversely, assume that $v$ has linear growth. Given a big class $\alpha$, we may find a big line bundle $L$ on $X$ such that $c_1(L)-\alpha$ is the class of an effective $\R$-divisor $E$. We then have
$$
v(E)+\sup_{D\equiv\alpha}v(D)\le\sup_{D'\equiv L} v(D'),
$$
and we may thus assume wlog that $\alpha=c_1(L)$ is the numerical class of a big line bundle $L$. By Lemma \ref{lem:conv} below and linearity, we are reduced to proving that $v(D)=O(m)$ for all effective Cartier divisors $D$ numerically equivalent to $mL$.

We can then follow the usual argument relying on \cite[Lemma 2.2.42]{PAG}. The latter yields the existence of a very ample line bundle $A$ such that $A+N$ is very ample for every nef line bundle $N$. Since $mL-D$ is by assumption numerically trivial, it follows in particular that $A+mL-D$ has a non-zero section.

Since $L$ is big, we may assume that $m$ is large enough to guarantee that $H^0(mL-A)\ne 0$. Twisting the canonical section $\sigma_D$ of $\cO_X(D)$ by a non-zero section of $H^0(mL-A)$ and a non-zero section of $A+mL-D$ yields a section $s$ of $mL$, and we thus get as desired
$$
v(D)\le v(s)=O(m).
$$
\end{proof}

\begin{lemma}\label{lem:conv} If $\alpha\in N^1(X)_\Q$ is a rational numerical class, then every effective $\R$-Cartier divisor $D\equiv\alpha$ can be written as a convex combination of effective $\Q$-Cartier divisors in the class of $\alpha$.
\end{lemma}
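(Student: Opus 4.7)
The plan is to reduce the statement to an elementary fact in convex geometry: in a rational affine subspace of $\R^N$, every interior point of the non-negative orthant is a convex combination of nearby rational points of the subspace, which remain in the orthant by continuity.

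Write $D=\sum_{j=1}^N a_j E_j$ with $a_j>0$ and $E_1,\ldots,E_N$ the distinct prime divisors appearing in $D$. Consider the $\Q$-linear numerical class map $\sigma\colon\R^N\to N^1(X)_\R$ defined by $(x_j)\mapsto\sum x_j[E_j]$, and the fiber $V:=\sigma^{-1}(\alpha)$. Since $\alpha\in N^1(X)_\Q$ and $D\in V$, this fiber is a non-empty rational affine subspace of $\R^N$. Because all $a_j>0$, there is an open neighborhood $U$ of $D$ in $V$ contained in the open orthant $\R_{>0}^N$.

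Next I would pick a rational point $v_0\in V\cap\Q^N$ (available because a non-empty $\Q$-affine subspace of $\R^N$ has rational points by Gaussian elimination) and a $\Q$-basis $w_1,\ldots,w_d$ of the direction space $W:=\ker\sigma$, which exists because $W$ is $\Q$-defined. The rational points of $V$ are then precisely $\{v_0+\sum b_i w_i:b_i\in\Q\}$. Writing $D=v_0+\sum c_i w_i$ with $c_i\in\R$, I would choose for each $i$ rationals $p_i<c_i<q_i$ close enough to $c_i$ that the $d$-dimensional box $\{v_0+\sum\delta_i w_i:\delta_i\in[p_i,q_i]\}$ is entirely contained in $U$. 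The $2^d$ vertices of this box are rational points of $V$ lying in $U$, hence correspond to effective $\Q$-Cartier divisors numerically equivalent to $\alpha$, and $D$, lying inside the box, is visibly a convex combination of these vertices.

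The only genuine subtlety is the rationality of $V$, which is what guarantees simultaneously the existence of a rational point $v_0$ and of a rational basis of $W$; both are needed in order to approximate $D$ while remaining in $V$. Everything else is a routine application of density of rationals and of the openness of the effectivity condition at $D$.
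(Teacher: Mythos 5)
Your proof is correct and takes essentially the same route as the paper's: both identify the affine subspace of divisors supported on the components of $D$ and lying in the class $\alpha$ as a rational affine subspace of $\sum_i \R E_i \simeq \R^N$, and then use the fact that a point with strictly positive coordinates in such a rational affine subspace is a convex combination of nearby rational points that still have positive coordinates. The paper states this last step as an immediate consequence of rationality, while you spell it out via a rational box around $D$; this is a reasonable amplification of the same idea, not a different argument.
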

\begin{proof} If we let $E_i$ be the irreducible components of $D$, then the affine subspace $W$ of $\sum_i\R E_i$ consisting of $\R$-Cartier divisors supported on $\sum_i E_i$ and lying in the numerical class $\alpha$ is rational. As a consequence, $D\in W\cap\sum_i\R_+^* E_i$ can be written as a convex combination of elements in $W_\Q\cap\sum_i\R_+^* E_i$, and the result follows.
\end{proof}

\begin{proposition}\label{prop:linabhy} Every Abhyankar valuation has linear growth.
\end{proposition}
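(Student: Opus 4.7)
The plan is to reduce the statement to the case of a divisorial valuation via Lemma \ref{lem:dom}, and then handle that case with a classical intersection-theoretic bound.

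First I would fix an Abhyankar valuation $v$ and use Lemma \ref{lem:dom} to produce a divisorial valuation $v'$ with $v \le v'$ on $\cO_{X,c_X(v)}$. The resulting inclusion $\cO_{X,c_X(v)} \subset \cO_{v'}$ forces the center of $v'$ to correspond to a prime of $\cO_{X,c_X(v)}$ contained in its maximal ideal, so $c_X(v')$ is a generalization of $c_X(v)$ and every open neighborhood of $c_X(v)$ contains $c_X(v')$ as well. Consequently, any trivialization of $L$ on an open $U \ni c_X(v)$ can be used to compute both valuations on a section $s \in H^0(mL)$: if $s$ corresponds to $h \in \cO_X(U) \subset \cO_{X,c_X(v)}$, then $v(s) = v(h)$ and $v'(s) = v'(h)$, and the local comparison yields $v(s) \le v'(s)$. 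Taking the maximum over $s$ gives $a_{\max}(mL,v) \le a_{\max}(mL,v')$ for every $m$, so it suffices to prove linear growth for divisorial valuations.

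For the divisorial case I would write $v' = c \cdot \ord_E$ with $E$ a prime divisor on a proper birational model $X' \to X$, and appeal to Lemma \ref{lem:indep} together with Lemma \ref{lem:birinv} to reduce to $X = X'$. Since $X$ is normal, $E$ is Cartier in a neighborhood of its generic point, which is all that is needed. Fix an ample divisor $A$ on $X$; then $A|_E$ remains ample on $E$, so $E \cdot A^{n-1} > 0$, where $n = \dim X$. For any nonzero $s \in H^0(mL)$, I would decompose the effective divisor $\div(s) = \ord_E(s) \cdot E + D$ with $D$ effective and not containing $E$, and intersect with $A^{n-1}$; using $D \cdot A^{n-1} \ge 0$, this gives
\[
\ord_E(s) \cdot (E \cdot A^{n-1}) \;\le\; m\, L \cdot A^{n-1}.
\]
Dividing by $m$ and passing to the limit yields $a_{\max}(\|L\|, v') \le c \cdot (L \cdot A^{n-1})/(E \cdot A^{n-1}) < \infty$, as desired.

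The main obstacle in this outline is the passage from the local-ring comparison $v \le v'$ to the comparison $v(s) \le v'(s)$ on sections of a line bundle: this is not tautological and depends on the observation above about the relative positions of $c_X(v)$ and $c_X(v')$. Once this is settled, the divisorial bound is a routine intersection-theoretic inequality, and the whole argument is characteristic-free, matching the generality in which Lemma \ref{lem:dom} is available.
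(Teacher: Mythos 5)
Your proposal is correct and follows essentially the same route as the paper: reduce to the divisorial case via Lemma \ref{lem:dom}, pass to a model where the divisor lives via Lemma \ref{lem:birinv}, and conclude by intersecting $\div(s)$ with powers of an ample divisor. The one place you add genuine content is in justifying that $v \le v'$ on $\cO_{X,c_X(v)}$ implies $v(s) \le v'(s)$ for sections of $L$, by observing that $c_X(v')$ is necessarily a generalization of $c_X(v)$ so the two valuations can be computed against a common local trivialization; the paper states this comparison as a ``simple observation'' without elaboration, and your argument is a correct way to make it precise.
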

\begin{proof} Let $L$ be a big line bundle on $X$. We start with a simple observation: if $v$ and $v'$ are two real valuations on $X$ such that $v\le v'$ on the local ring of $X$ at $c_X(v)$, then $a_{\max}(\|L\|,v)\le a_{\max}(\|L\|,v')$, so that $v$ has linear growth whenever $v'$ does.

By Lemma \ref{lem:dom}, we are thus reduced to the case of a divisorial valuation $v$. By Lemma \ref{lem:birinv}, we may replace $X$ with a higher birational model and assume that $v=\ord_E$ with $E$ a prime divisor on $X$. Pick an ample line bundle $A$ on $X$. For each non-zero section $s\in H^0(mL)$ we then have
$$
\ord_E(s)\left(\left(A|_E\right)^{\dim X-1}\right)\le m\left(L\cdot A^{\dim X-1}\right),
$$
which shows as desired that $a_{\max}(mL,v)=O(m)$.
\end{proof}

\begin{remark} In characteristic zero, the result is a weak consequence of \cite[Theorem B]{BFJ2}, and can also be deduced from \cite[Theorem A]{ELS}.
\end{remark}

The converse of Proposition \ref{prop:linabhy} fails in general, since already in dimension $2$ there exist non-Abhyankar real valuations $v$ that can be dominated by a divisorial valuation.
\begin{example} Let
$$
\gamma(t):=\sum_{j\ge 0} a_j t^{\beta_j}
$$
be a generalized Puiseux series with $a_j\in k$ and $\beta_j$ an increasing sequence of positive rational numbers bounded above by $C\in [1,+\infty)$ (and hence with unbounded denominators). Then $\gamma$ defines a valuation centered at the origin of $\mathbb{A}^2$ by setting
$$
v(P):=\ord_0 P(t,\gamma(t))
$$
for $P\in k[x,y]$. Using $\beta_j\le C$ for all $j$, it is straightforward to check that $v\le C v_0$ on $k[x,y]$, where $v_0$ is the divisorial valuation on $\mathbb{A}^2$ given by vanishing order at the origin. Since $v_0$ has linear growth, so does $v$. On the other hand, it follows from \cite[Chapter 4]{FJ} that $v$ is not an Abhyankar valuation.
\end{example}

We may however ask:
\begin{conjecture}\label{conj:lin} A real valuation $v$ has linear growth iff there exists a divisorial valuation $v'$ such that $v\le v'$ at the center of $v$ on some birational model.
\end{conjecture}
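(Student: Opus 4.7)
One implication is immediate, and essentially reprises the opening line of the proof of Proposition~\ref{prop:linabhy}. If $v \le v'$ on $\cO_{X',\xi}$ with $\xi = c_{X'}(v)$ and $v'$ divisorial, the inequality forces $c_{X'}(v') = \xi$ as well (since $v' \ge v \ge 0$ on $\cO_{X',\xi}$ and $v' \ge v > 0$ on $\mathfrak m_\xi$), so trivializing any big line bundle $L$ near $\xi$ gives $v(s) \le v'(s)$ for every non-zero $s \in H^0(mL)$. Consequently $a_{\max}(mL,v) \le a_{\max}(mL,v') = O(m)$ by Proposition~\ref{prop:linabhy}, so $v$ has linear growth.

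For the converse, Lemma~\ref{lem:dom} handles the Abhyankar case, so the content lies in producing a dominating divisorial valuation when $v$ is non-Abhyankar. My plan would be to construct $v'$ as an accumulation point of divisorial valuations attached to a tower of infinitely-near centers of $v$. Replacing $X$ by a normal birational model, we may assume $\xi := c_X(v)$ is a closed point; let $\pi_k : X_k \to X_{k-1} \to \dots \to X$ be the sequence of normalized blow-ups along the closure of the center $\xi_{k-1}$ of $v$ on $X_{k-1}$, denote by $E_k$ the unique exceptional prime of $\pi_k$ containing $\xi_k = c_{X_k}(v)$, and set $v_k := \ord_{E_k}$. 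The strategy is to use the linear growth of $v$ to choose rescalings $c_k > 0$ for which $c_k v_k \ge v$ on an increasing exhaustion of $\cO_{X,\xi}$ by finitely generated subalgebras, and then extract a limit $v' := \lim c_{k_j} v_{k_j}$ in the compact Riemann--Zariski space; one would finally verify that $v'$ is itself divisorial and that the inequality $v \le v'$ persists in the limit.

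The principal obstacle is the mismatch between hypothesis and conclusion: linear growth is a global condition on sections of $\bigoplus_m H^0(mL)$, whereas the desired inequality $v \le v'$ is a local condition on $\cO_{X,\xi}$. A local-to-global bridge --- an analogue of Izumi's theorem for non-Abhyankar valuations --- would be needed to convert uniform boundedness of $m^{-1}a_{\max}(mL,v)$ into a uniform choice of the scaling constants $c_k$ above. Izumi's theorem in its classical form only compares pairs of divisorial valuations sharing a center, and the Puiseux example preceding the conjecture shows that the non-divisorial valuations for which such a bound does hold can already be rather wild; absent a genuinely new comparison principle tying $v$ to divisorial valuations at $\xi$, the extraction procedure can fail to converge. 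This is presumably why the statement remains a conjecture.
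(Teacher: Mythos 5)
Your forward implication is correct and is exactly the observation the paper itself makes at the start of the proof of Proposition~\ref{prop:linabhy}: if $v\le v'$ on $\cO_{X',c_{X'}(v)}$ then $a_{\max}(\|L\|,v)\le a_{\max}(\|L\|,v')$, and the latter is finite since $v'$ is divisorial, hence Abhyankar. (Your remark that $c_{X'}(v')$ is forced to equal $c_{X'}(v)$ is fine but not strictly needed; divisorial valuations automatically have centers on every normal projective model.) You are also right that the reverse implication is precisely what makes this a conjecture rather than a theorem, so you should not be expected to prove it.

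However, you should be aware that the paper is not entirely silent on the converse: immediately after stating the conjecture it announces, and in Theorem~\ref{thm:vol} proves, the closed-point case, namely that for $v$ centered at a closed point $x$, linear growth is equivalent both to $\vol_X(v)>0$ and to domination $v\le w$ by a divisorial valuation $w$ centered at $x$. Its strategy there is quite different from the one you sketch. Rather than extracting a limit of rescaled exceptional divisors of a blow-up tower in the Riemann--Zariski space, the paper twists $mL$ by the valuation ideals $\fa_m=\{v\ge m\}$, uses Lemma~\ref{lem:LM} to arrange global generation and $H^1$-vanishing, observes via \cite[Lemma 1.6]{BC} that the graded algebra $S=\bigoplus_m H^0(\cO_X(mL)\otimes\fa_m)$ contains an ample series once $a_{\max}(\|L\|,v)>1$, and then invokes the appendix to \cite{Szek} to produce a divisorial $w$ with $w\ge m$ on $H^0(\cO_X(mL)\otimes\fa_m)$, which one then verifies dominates $v$ on $\cO_{X,x}$ using global generation. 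Your instinct that an Izumi-type comparison is the crux is in fact vindicated: the cited appendix relies on Izumi's theorem. What you underestimate is how far that input already carries one in the zero-dimensional-center case; the genuinely open part of the conjecture is the case of positive-dimensional centers (where $\vol_X$ is not the right invariant and the Okounkov-body dimension count in the argument above no longer matches $\dim X$), and it is there that your proposed tower-and-compactness approach would have to contend with the convergence issues you rightly flag.
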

As we shall see, the conjecture holds at least when $v$ is centered at a closed point on some birational model.

To this end, we will relate the linear growth condition to notion of \emph{volume} of a valuation. Let $v$ be a real valuation $v$ with center $c_X(v)=\xi$ and valuation ideals
$$
\fa_m:=\left\{v\ge m\right\}\subset\cO_X
$$
for $m\in\N$, and set $d=\dim\cO_{X,\xi}$. By \cite{ELS,LM,C2}, the limit
$$
\vol_X(v):=\lim_{m\to+\infty}\frac{d!}{m^d}\,\mathrm{length}\left(\calo_{X,\xi}/\fa_m\right)
$$
exists in $[0,+\infty)$, and is called the \emph{volume} of $v$. It can also expressed in terms of the Hilbert-Samuel multiplicities of the valuation ideals:
$$
\vol_X(v)=\lim_{m\to+\infty}\frac{e\left(\fa_m\right)}{m^d}.
$$

\begin{theorem}\label{thm:vol} For a real valuation $v$ centered at a closed point $x$ of a normal projective variety $X$,  the following conditions are equivalent.
\begin{itemize}
\item[(i)] $v$ has linear growth.
\item[(ii)] $\vol_X(v)>0$.
\item[(iii)] there exists a divisorial valuation $w$ centered at $x$ such that $v\le w$ on $\cO_{X,x}$.
\end{itemize}
\end{theorem}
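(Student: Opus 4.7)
The plan is to establish the cyclic implications $(\mathrm{iii})\Rightarrow(\mathrm{i})\Rightarrow(\mathrm{ii})\Rightarrow(\mathrm{iii})$. The first is immediate from Proposition~\ref{prop:linabhy}: if $w$ is divisorial and $v\le w$ on $\cO_{X,x}$, then the monotonicity observation already used in the proof of that proposition yields $a_{\max}(\|L\|,v)\le a_{\max}(\|L\|,w)<\infty$ for every big $L$, so $v$ has linear growth.

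For $(\mathrm{i})\Rightarrow(\mathrm{ii})$, the key preliminary is that $v$ being centered at the closed point $x$ forces $\sqrt{\fa_m}=\fm_x$ for every $m\ge1$ (since $v(f)>0$ iff $f\in\fm_x$), so $\cO_{X,x}/\fa_m$ is an Artinian $k$-algebra whose length equals its $k$-dimension. Given a big $L$ with $a_{\max}(\|L\|,v)=C<\infty$, Lemma~\ref{lem:subadd} yields $a_{\max}(mL,v)\le Cm$ for $m\ge m_0$. Fixing a local trivialization of $L$ near $x$, the induced $k$-linear evaluation
\[
\rho_m\colon H^0(X,mL)\;\longrightarrow\;\cO_{X,x}\big/\fa_{\lfloor Cm\rfloor+1}
\]
has kernel $\cF_v^{\lfloor Cm\rfloor+1}H^0(mL)=0$ by the bound on $a_{\max}(mL,v)$, so $\rho_m$ is injective. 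Hence $h^0(mL)\le\ell\bigl(\cO_{X,x}/\fa_{\lfloor Cm\rfloor+1}\bigr)$; dividing by $m^d$ and letting $m\to\infty$ gives $\vol(L)\le C^d\vol_X(v)$, so $\vol_X(v)\ge\vol(L)/C^d>0$.

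The main content of the theorem is $(\mathrm{ii})\Rightarrow(\mathrm{iii})$: the positivity of $\vol_X(v)$ must be converted into a divisorial valuation dominating $v$. The plan is to pass to a smooth model above $x$ (so that $\ord_x$ is itself divisorial) and establish the Izumi-type linear comparison $v\le C\cdot\ord_x$ on $\cO_{X,x}$ for some $C>0$, which immediately yields (iii) with $w:=C\,\ord_x$. This comparison is equivalent to the uniform lower bound $\ord_x(\fa_m)\ge m/C$ on the valuation ideals, and the idea is to extract it from the Hilbert--Samuel multiplicity asymptotics $e(\fa_m)\sim\vol_X(v)\,m^d$ coming from (ii), combined with the Rees-valuation/analytic-spread analysis of the graded sequence of $\fm_x$-primary ideals $(\fa_m)$ developed in \cite{ELS,LM,C2}.

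The main obstacle is $(\mathrm{ii})\Rightarrow(\mathrm{iii})$. The naive length estimate via the surjection $\cO_{X,x}/\fa_m\twoheadrightarrow\cO_{X,x}/\fm_x^{\ord_x(\fa_m)}$ produces only the upper bound $\ord_x(\fa_m)\le m\bigl(\vol_X(v)/e(\fm_x)\bigr)^{1/d}$, which is the wrong direction; the required lower bound must therefore be obtained from the full multiplicative structure of the graded system $(\fa_m)$ and the geometric content of its Rees valuations, rather than from a direct length count.
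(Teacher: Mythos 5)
The implications $(\mathrm{iii})\Rightarrow(\mathrm{i})$ and $(\mathrm{i})\Rightarrow(\mathrm{ii})$ are handled correctly and essentially as in the paper: the first is the monotonicity observation plus Proposition~\ref{prop:linabhy}, and the second is the injectivity of the evaluation map $H^0(mL)\to\cO_{X,x}/\fa_{\lceil cm\rceil}$ for $c>a_{\max}(\|L\|,v)$, together with passage to the limit.

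The genuine gap is in $(\mathrm{ii})\Rightarrow(\mathrm{iii})$, which is the heart of the theorem, and you in fact identify the gap yourself. You correctly recognize that the target is an Izumi-type linear bound $v\le C\ord_x$, and you correctly observe that the direct length comparison $\ell(\cO_{X,x}/\fa_m)\ge\ell(\cO_{X,x}/\fm_x^{\ord_x(\fa_m)})$ only yields an \emph{upper} bound on $\ord_x(\fa_m)$, which is the wrong direction. But the proposal stops there: ``combined with the Rees-valuation/analytic-spread analysis of the graded sequence'' is an aspiration, not an argument, and there is no indication of how those tools would actually produce the required lower bound $\ord_x(\fa_m)\gtrsim m$. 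In particular, the difficulty is not merely technical: one needs to exploit the \emph{positivity} of $\vol_X(v)$ in an essential way, and it is not clear from what you write how that hypothesis enters.

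For comparison, the paper's route is quite different in structure. Rather than estimating $\ord_x(\fa_m)$ directly, it globalizes: for a sufficiently positive multiple $L$ of an ample line bundle, Lemma~\ref{lem:LM} guarantees that $\cO_X(mL)\otimes\fa_m$ is globally generated with vanishing $H^1$, so that the graded algebra $S=\bigoplus_m H^0(X,\cO_X(mL)\otimes\fa_m)$ satisfies $\vol(S)=\vol(L)-\vol_X(v)$. Choosing $L$ so that $a_{\max}(\|L\|,v)>1$, $S$ contains an ample series, and $\vol_X(v)>0$ forces $\vol(S)<\vol(L)$. The first author's appendix to \cite{Szek} (which is where Izumi's theorem actually enters) then produces a divisorial valuation $w$ satisfying $w\ge m$ on $S_m$; finally, global generation of $\cO_X(mL)\otimes\fa_m$ is used to descend this to the local inequality $w\ge v$ on $\cO_{X,x}$. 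This last step, which converts a bound on global sections into a bound on germs, is precisely the mechanism your plan is missing. If you want to pursue your more ``local'' plan, you would need a substitute for this argument, e.g.\ an explicit Izumi-type theorem for graded sequences of $\fm_x$-primary ideals with positive volume, which is not an off-the-shelf statement in the references you cite.
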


\begin{proof} (i)$\Longrightarrow$(ii) is elementary. Indeed, pick any big line bundle $L$ on $X$ and a rational $c>a_{\max}(\|L\|,v)$. By definition of the latter, we have
$$
H^0(X,\cO(mL)\otimes\fa_{mc})=0
$$
for all $m\gg 1$, which means that the restriction map
$$
H^0(X,mL)\to\cO_X(mL)\otimes\left(\cO_{X,x}/\fa_{mc}\right)
$$
is injective. Setting $n:=\dim X$, it follows that
$$
\frac{n!}{m^n}h^0(mL)\le\frac{(mc)^n}{n!}\,\mathrm{length}\left(\cO_{X,x}/\fa_{mc}\right),
$$
and hence $\vol(L)\le c^n\vol_X(v)$ in the limit. This implies that $\vol_X(v)>0$, and more precisely
$$
a_{\max}\left(\|L\|,v\right)\ge\left(\frac{\vol(L)}{\vol_X(v)}\right)^{1/n}\in(0,+\infty].
$$
The proof of (ii)$\Longrightarrow$(iii) is more involved. By Lemma \ref{lem:LM}, for each sufficiently large multiple $L$ of a given very ample line bundle $H$, $\cO_X(mL)\otimes\fa_m$ is globally generated for all $m\ge 1$, and further satisfies $H^1(X,\cO_X(mL)\otimes\fa_m)=0$. The latter condition yields the surjectivity of the restriction map
$$
H^0(X,mL)\to\cO_X(mL)\otimes\left(\cO_{X,x}/\fa_m\right),
$$
and hence
$$
h^0(\cO_X(mL)\otimes\fa_m)=h^0(mL)-\dim\left(\cO_{X,x}/\fa_m\right).
$$
If we take $L$ to be a large enough multiple of $H$, we can also achieve that
$$
a_{\max}(\|L\|,v)>1,
$$
simply by homogeneity with respect to $L$. Thanks to \cite[Lemma 1.6]{BC}, this condition implies that the graded algebra
$$
S:=\bigoplus_{m\in\N} H^0\left(X,\cO_X(mL)\otimes\fa_m\right)
$$
contains an ample series, which implies in turn the existence in $(0,+\infty)$ of
$$
\vol(S)=\lim_{m\to\infty}\frac{n!}{m^n} h^0(\cO_X(mL)\otimes\fa_m),
$$
by \cite[Proposition 2.1]{LM}. Note also that $\vol(S)=\vol(L)-\vol_X(v)$ by what we have just seen.

If we assume that $\vol_X(v)>0$, then $\vol(S)<\vol(L)$, and the first author's appendix to \cite{Szek} (which relies on Izumi's theorem) yields a divisorial valuation $w$ such that $w(s)\ge m$ for all $s\in H^0(\cO_X(mL)\otimes\fa_m)$.

Let us now check that $w(f)\ge v(f)$ for each $f\in\cO_{X,x}$. Given $j\ge 1$, define $m_j:=\lfloor j v(f)\rfloor$, so that $f^j$ belongs to $\fa_{m_j}$. Since $\cO_X(m_jL)\otimes\fa_{m_j}$ is globally generated, we can find a section $s_j\in H^0(X,\cO_X(m_jL)\otimes\fa_{m_j})$ such that $s_j=\tau_j f^j$ at $x$ for some local trivialization $\tau_j$ of $m_j L$. We thus get
$$
w(f)= j^{-1}w(s_j)\ge j^{-1} m_j,
$$
hence $w(f)\ge v(f)$ after letting $j\to\infty$.

Finally, (iii)$\Longrightarrow$(i) is a consequence of Proposition \ref{prop:linabhy}.
\end{proof}

The next lemma is a simple variant of \cite[Lemma 3.9]{LM}.
\begin{lemma}\label{lem:LM} Let $H$ be very ample line bundle on a projective variety $X$, and $(\fa_m)_{m\in\N}$ be a graded sequence of non-zero ideals cosupported at a fixed closed point $x$. For each $l\gg 1$, $L:=lH$ satisfies
\begin{itemize}
\item[(i)] $H^q(X,\cO_X(mL)\otimes\fa_m)=0$ for all $m,q\ge 1$;
\item[(ii)] $\cO_X(mL)\otimes\fa_m$ is globally generated for all $m\ge 1$.
\end{itemize}
\end{lemma}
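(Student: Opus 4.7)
My plan is to establish Castelnuovo--Mumford regularity: I will show that for $l\gg 1$, the sheaf $\cO_X(mL)\otimes\fa_m$ is $0$-regular with respect to $H$ for every $m\ge 1$. By Mumford's theorem, $0$-regularity yields both the higher cohomology vanishing (i) and the global generation (ii) simultaneously. Explicitly, $0$-regularity amounts to $H^q(X,\cO_X((ml-q)H)\otimes\fa_m)=0$ for every $q\ge 1$.

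Tensoring $0\to\fa_m\to\cO_X\to\cO_X/\fa_m\to 0$ by $\cO_X((ml-q)H)$ and using that $\cO_X/\fa_m$ is a skyscraper at $x$ (hence cohomologically trivial in positive degrees), the cases $q\ge 2$ reduce to $H^q(X,\cO_X((ml-q)H))=0$, which holds for $l\ge l_0+\dim X$ by Serre vanishing. For $q=1$, the same long exact sequence together with the Serre vanishing of $H^1(X,\cO_X((ml-1)H))$ reduces the problem to the surjectivity of
\[
H^0(X,\cO_X((ml-1)H))\twoheadrightarrow\cO_{X,x}/\fa_{m,x}.
\]
Since $\fa_1$ is cosupported at $x$ we have $\fm_x^r\subset\fa_1$ for some $r\ge 1$, and the graded sequence property $\fa_1^m\subset\fa_m$ then gives $\fm_x^{mr}\subset\fa_m$; it therefore suffices to prove that $\cO_X((ml-1)H)$ separates $(mr-1)$-jets at $x$, i.e., $H^1(X,\fm_x^{mr}\otimes\cO_X((ml-1)H))=0$ for all $m\ge 1$.

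To prove this vanishing, I pass to the blow-up $\pi:Y\to X$ at $x$ (assuming for brevity that $x$ is smooth on $X$; the singular case needs a normal birational model plus an extra Brian\c{c}on--Skoda step to pass between $\fm_x^k$ and its integral closure $\pi_*\cO_Y(-kE)$). The standard identities $\pi_*\cO_Y(-kE)=\fm_x^k$ and $R^q\pi_*\cO_Y(-kE)=0$ for $q\ge 1$, $k\ge 0$ convert, via Leray, the required vanishing into $H^1(Y,(ml-1)\pi^*H-mrE)=0$. Pick $l_0$ large enough that $A:=l_0\pi^*H-rE$ is ample on $Y$ (possible by positivity of the Seshadri constant $\epsilon(H;x)$) and set $l=l_0+l'$; then
\[
(ml-1)\pi^*H-mrE\;=\;mA+(ml'-1)\pi^*H,
\]
an ample bundle plus a nef one. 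Fujita's vanishing, applied to the ample $A$ and the structure sheaf, supplies $m_0$ with $H^q(Y,mA+N)=0$ for every $m\ge m_0$, $q\ge 1$, and nef $N$; this handles $m\ge m_0$ uniformly in $l'\ge 1$. For the finitely many cases $m=1,\dots,m_0-1$, the projection formula together with $R^{\ge 1}\pi_*\cO_Y(-mrE)=0$ identifies $H^q(Y,mA+(ml'-1)\pi^*H)$ with $H^q(X,\cF_m\otimes\cO_X((ml'-1)H))$ for the fixed coherent sheaf $\cF_m:=\fm_x^{mr}\otimes\cO_X(ml_0 H)$; Serre vanishing on $X$ then yields a threshold on $l'$ for each such $m$, and taking $l'$ larger than the maximum of the finitely many resulting thresholds gives a single $l=l_0+l'$ that works uniformly.

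The main obstacle is making the vanishing uniform in $m\ge 1$: Fujita's theorem only delivers it for $m$ above some bound, and the small values of $m$ must be absorbed by enlarging $l$, which is exactly what the ample-plus-nef decomposition of the line bundle makes possible. A secondary difficulty is the possible singularity of $X$ at $x$, handled as mentioned by a careful choice of birational model.
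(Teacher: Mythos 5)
Your reduction to Castelnuovo--Mumford $0$-regularity, which yields (i) and (ii) simultaneously, is exactly the reduction the paper makes; the paper then simply defers the required vanishing $H^q(X,\cO_X((ml-q)H)\otimes\fa_m)=0$ to the argument in the proof of \cite[Lemma 3.9]{LM}, whereas you supply a self-contained proof via the blow-up at $x$ and Fujita vanishing. That is a genuinely different and more geometric route, and the uniform-in-$m$ bookkeeping --- Fujita vanishing for $m\ge m_0$, Serre vanishing on $X$ via the projection formula for the finitely many $m<m_0$, absorbed by enlarging $l$ --- is correct, \emph{provided} $x$ is a smooth point of $X$.

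The genuine gap is the singular case. Your argument rests on $\pi_*\cO_Y(-kE)=\fm_x^k$ and $R^{q}\pi_*\cO_Y(-kE)=0$ for $q\ge 1$ on the blow-up $\pi:Y\to X$ at $x$, and both of these fail in general when $\cO_{X,x}$ is not regular. The lemma is used in the proof of Theorem~\ref{thm:vol} for a normal projective $X$ and an arbitrary closed point $x$, so the singular case cannot be set aside. Your proposed repair is only gestured at and hides several real difficulties: one must pass to the normalized blow-up; the identification $\pi_*\cO_Y(-kE)=\overline{\fm_x^k}$ and the relative vanishing $R^{q\ge 1}\pi_*\cO_Y(-kE)=0$ hold only for $k\gg 0$ (which forces you to enlarge $r$ and redo the Fujita decomposition with an extra fixed $-cE$ twist); and the comparison $\overline{\fm_x^{k+c}}\subset\fm_x^k$ uniformly in $k$ is not the regular-local Brian\c{c}on--Skoda theorem but a linear Artin--Rees/Rees-algebra finiteness statement. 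None of this is impossible, but it is a substantive missing piece, and it is precisely what the route through \cite[Lemma 3.9]{LM} avoids, since that argument does not pass through a blow-up and is insensitive to the singularity type of $x$.
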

\begin{proof} Property (i) follows directly from \cite[Lemma 3.9]{LM}. In order to get (ii), we rely on the Castelnuovo-Mumford critetion for global generation (cf. \cite[Theorem I.1.8.3]{PAG}), which reduces us to proving the existence of $l_0$ such that
$$
H^q(X,\cO_X(mlH-qH)\otimes\fa_m)=0
$$
for all $q\ge 1$, $l\ge l_0$ and all $m\ge 1$. This vanishing is then checked exactly as in the proof of \cite[Lemma 3.9]{LM}.
\end{proof}

\begin{remark} When $X$ is $2$-dimensional and smooth at $x$, the equivalence between $\vol_X(v)>0$ and linear growth also follows from \cite[Remark 3.3]{FJ}, which gives more precisely that $v\le\vol_X(v)^{-1}\ord_x$ on $\cO_{X,x}$.
\end{remark}

\begin{example}\label{ex:vol} A simple example of a valuation $v$ on $\P^2$ with $\vol_X(v)=0$ (and hence nonlinear growth) is given in \cite[Remark 2.6]{ELS}: let $v$ be the valuation centered at $0\in\mathbb{A}^2$ given by the vanishing order at $t=0$ on the formal arc $t\mapsto (t, e^t-1)$. By \cite[Example 1.4 (iv)]{ELS}, the valuation ideal $\fa_m=\{v\ge m\}$ is generated by $x^m$ and $y-(x+...+x^{m-1}/(m-1)!)$, hence has colength $m$, and it follows that $\vol_X(v)=0$.
\end{example}

\subsection{Equidistribution of vanishing sequences}
In this section we prove our first main result (Theorem A in the introduction), which describes the asymptotic behavior of vanishing sequences.

\begin{thm}\label{thm:equid} Let $L$ be a big line bundle on a normal projective variety $X$, and set $N_m:=h^0(mL)$.
\begin{itemize}
\item[(i)] For any real valuation $v$ on $X$, the scaled vanishing sequence
$$
\left(m^{-1} a_j(mL,v)\right)_{1\le j\le N_m}
$$
equidistributes as $m\to\infty$, in the sense that the sequence of discrete probability measures
$$
\nu_k:=\frac{1}{N_m}\sum_j\delta_{m^{-1}a_j(mL,v)}
$$
converges weakly to a positive measure $\mu_{L,v}$ on $\R$.

\item[(ii)]  If $v$ has linear growth, then $\mu_{L,v}$ is a probability measure with support in $[a_{\min}(\|L\|,v),a_{\max}(\|L\|,v)]$, and whose singular part with respect to the Lebesgue measure consists at most of a Dirac mass at $a_{\max}(\|L\|,v)$.

\item[(iii)] If $v$ doesn't have linear growth, then $\mu_{L,v}=0$.
\end{itemize}
\end{thm}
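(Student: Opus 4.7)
The plan is to derive the theorem from the main equidistribution result of \cite{BC}, using a truncation argument to handle the case of no linear growth. Recall that \cite{BC} establishes weak convergence of the rescaled jumping measures associated to any multiplicative, left-continuous filtration of the section ring $R(L)$ with \emph{linear growth}, and describes the limit as the push-forward of normalized Lebesgue measure on the Okounkov body under a concave transform. The filtration $\cF_v$ is automatically multiplicative and left-continuous, but has linear growth precisely when $v$ does, so the first step is to reduce to that case.

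For each $C>0$, I would introduce the truncated filtration
$$
(\cF^{(C)}_v)^t H^0(mL):=\cF_v^{\min(t,mC)}H^0(mL).
$$
The inequality $\min(s,mC)+\min(t,m'C)\le\min(s+t,(m+m')C)$ shows this is still multiplicative, and by construction its largest jumping number on $H^0(mL)$ is bounded by $mC$, so it has linear growth. Set $\phi_m(t):=\dim\cF_v^{mt}H^0(mL)/N_m$, and define $\phi^{(C)}_m(t)$ analogously for $\cF^{(C)}_v$: then $\phi^{(C)}_m(t)=\phi_m(t)$ for $t\le C$ while $\phi^{(C)}_m(t)=\phi_m(C)$ for $t\ge C$. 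Applying \cite{BC} to $\cF^{(C)}_v$, the functions $\phi^{(C)}_m$ converge pointwise a.e.\ to a decreasing function $\psi^{(C)}$. Since $\psi^{(C)}|_{[0,C']}=\psi^{(C')}|_{[0,C']}$ whenever $C'\le C$, we obtain a well-defined decreasing limit $\psi(t):=\lim_m\phi_m(t)$ on $[0,+\infty)$, and integration by parts yields the weak convergence $\nu_m\to\mu_{L,v}:=-d\psi$ on $\R$, establishing (i). For (ii), if $v$ has linear growth and $C>a_{\max}(\|L\|,v)$, then $\cF^{(C)}_v$ and $\cF_v$ coincide on $H^0(mL)$ for $m\gg 1$, and the assertions follow directly from \cite{BC}, the possible Dirac at $a_{\max}(\|L\|,v)$ corresponding to the only boundary discontinuity that the concave transform may exhibit.

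For (iii), it suffices to show that $\psi(C)=1$ for every $C>0$, since then $\mu_{L,v}=-d\psi=0$. Fix $C>0$ and $T\gg 1$. Because $v$ does not have linear growth, $\sup_N N^{-1}a_{\max}(NL,v)=+\infty$, so we can find $N\ge 1$ and a non-zero section $s_N\in H^0(NL)$ with $v(s_N)\ge TNC$. For $m>TN$, set $k:=\lceil m/(TN)\rceil$: then $v(s_N^k)\ge kTNC\ge mC$, and multiplication by $s_N^k$ embeds $H^0((m-kN)L)$ into $\cF_v^{mC}H^0(mL)$. Using $kN\le m/T+N$ and $h^0(mL)\sim m^n\vol(L)/n!$ as $m\to\infty$, one obtains
$$
\phi_m(C)\ge\frac{h^0((m-kN)L)}{h^0(mL)}\ge(1-1/T-N/m)^n(1+o(1)),
$$
so $\liminf_m\phi_m(C)\ge(1-1/T)^n$. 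Letting $T\to\infty$ then forces $\psi(C)=1$.

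The main difficulty is to verify that \cite{BC} genuinely applies to each $\cF^{(C)}_v$: the ample series hypothesis needed there must be checked for the associated graded subalgebras of $R(L)$, but this should follow from the bigness of $L$ since the truncation only affects the extremal end of the filtration. A secondary technical point is the possible mass of $\nu_m$ near $0$ when $v$ vanishes on some sections, which in case (iii) is handled by the bound $\nu_m([0,\epsilon])\le 1-\phi_m(\epsilon')$ for any $\epsilon'>\epsilon$, combined with $\phi_m(\epsilon')\to 1$.
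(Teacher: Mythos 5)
Your central step---replacing $\cF_v$ by a truncated filtration to which the main result of \cite{BC} applies---does not work as stated: the truncation $(\cF^{(C)}_v)^t H^0(mL):=\cF_v^{\min(t,mC)}H^0(mL)$ is \emph{not} multiplicative, and the inequality you invoke goes the wrong way. Since the filtrations are decreasing, multiplicativity
$$
(\cF^{(C)}_v)^s H^0(mL)\cdot(\cF^{(C)}_v)^t H^0(m'L)\subset (\cF^{(C)}_v)^{s+t} H^0((m+m')L)
$$
would require $\min(s,mC)+\min(t,m'C)\ge\min(s+t,(m+m')C)$, whereas the true inequality (the one you wrote down) is $\le$. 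A concrete failure: take $m=m'=1$, $C=1$, $s=2$, $t=0$, and $x,y\in H^0(L)$ with $v(x)=1$, $v(y)=0$. Then $x\in(\cF^{(C)}_v)^2 H^0(L)=\cF_v^1 H^0(L)$ and $y\in(\cF^{(C)}_v)^0 H^0(L)$, yet $v(xy)=1<2$, so $xy\notin(\cF^{(C)}_v)^2 H^0(2L)=\cF_v^2 H^0(2L)$. Thus \cite{BC} does not apply to $\cF^{(C)}_v$, and the pointwise convergence of $\phi_m(t)$ is not established this way. (The ``cut to zero'' truncation $\tilde\cF^t:=\cF_v^t$ for $t\le mC$, $\tilde\cF^t:=0$ for $t>mC$, is multiplicative, but then $\phi^{(C)}_m$ no longer agrees with $\phi_m$ on $[0,C]$ in the way your gluing argument needs, so this variant would require reworking.)

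The paper sidesteps this by fixing $t$: the graded pieces $\cF_v^{mt}H^0(mL)$ form a genuine graded subalgebra $R(L,v\ge t)$ of $R(L)$, which contains an ample series for every $t<a_{\max}(\|L\|,v)$ by \cite[Lemma 1.6]{BC}; then \cite[Proposition 2.1]{LM} gives the limit $\vol(L,v\ge t)$, and Brunn--Minkowski gives concavity of $t\mapsto\vol(L,v\ge t)^{1/n}$. This one lemma replaces your entire truncation step and simultaneously supplies the $L^\infty_{\rm loc}$ bound on the density of $\mu_{L,v}$ in $(-\infty,a_{\max}(\|L\|,v))$, which is what (ii) actually needs; your appeal to ``the only boundary discontinuity the concave transform may exhibit'' is too vague on this point. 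Granting the existence of the limit, your explicit argument for (iii) via powers of a section $s_N$ with $v(s_N)\ge TNC$ is correct and is a nice alternative to the paper's observation that a concave, non-increasing, bounded-below function on all of $\R$ must be constant.
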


\begin{remark} When $v$ has linear growth, Theorem \ref{thm:equid} is actually a special case of the main result of \cite{BC}. Indeed, the filtration $\cF_v$ is linearly bounded in the sense of \cite{BC} in that case, and \cite[Theorem 1.11]{BC} directly implies (i), with $\mu_{L,v}$ given as the push-forward of the (normalized) Lebesgue measure on the Okounkov body of $L$ by the concave transform of the filtration (see \S\ref{sec:ok} below for more details). In case (iii) however, \cite{BC} doesn't a priori apply.
\end{remark}

The main ingredient in the proof is:
\begin{lemma}\label{lem:vol} Set $n:=\dim X$. For each $t<a_{\max}(\|L\|,v)$,
$$
\vol(L,v\ge t):=\lim_{m\to\infty}\frac{n!}{m^n}\dim\cF^{mt}_v H^0(mL)
$$
exists in $(0,+\infty)$, and $t\mapsto \vol(L,v\ge t)^{1/n}$ is furthermore concave and non-increasing on $(-\infty,a_{\max}(\|L\|,v))$, and constant on $(-\infty,a_{\min}(\|L\|,v)]$.
\end{lemma}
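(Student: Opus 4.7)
The plan is to exhibit the family $V_\bullet^t := \{\cF^{mt}_v H^0(mL)\}_m$ as a graded linear subseries of $R(L)=\bigoplus_m H^0(mL)$ and then to analyze it via the Okounkov body machinery of \cite{LM} (equivalently \cite{KK}). Multiplicativity of the valuation gives $V_m^t \cdot V_{m'}^t \subset V_{m+m'}^t$ for all $m,m'$, which is exactly the graded linear series condition.

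First I would establish existence and positivity. The upper bound $\dim V_m^t \le h^0(mL)=O(m^n)$ is immediate, so the content is in the lower bound. Since $t < a_{\max}(\|L\|,v)$, pick a rational $t'\in(t,a_{\max}(\|L\|,v))$ and, by Lemma \ref{lem:subadd}, some $\sigma\in H^0(m_1 L)\setminus\{0\}$ with $v(\sigma)\ge m_1 t'$. Multiplication by $\sigma^k$ then sends $H^0(mL)$ injectively into $V_{m+km_1}^t$ whenever $k>mt/(m_1(t'-t))$, since $v(\tau\otimes\sigma^k)\ge km_1 t'\ge(m+km_1)t$. Together with the multiplicative structure, this is enough to verify that $V_\bullet^t$ contains an ample series in the sense of \cite[\S 2]{LM}, so that \cite[Proposition 2.1]{LM} applies, yielding existence of the limit $\vol(L,v\ge t)=\lim_m n!\,\dim V_m^t/m^n$ in $(0,+\infty)$ together with the Okounkov body description $\vol(L,v\ge t)=n!\,\vol_n(\Delta(V_\bullet^t))$ for any admissible flag $\ybul$.

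For concavity of $t\mapsto\vol(L,v\ge t)^{1/n}$, I would exploit the inclusion $V_m^t\cdot V_{m'}^{t'}\subset V_{m+m'}^{(mt+m't')/(m+m')}$. This translates at the level of Okounkov bodies into
$$
\lambda\,\Delta(V_\bullet^t)+(1-\lambda)\,\Delta(V_\bullet^{t'})\subseteq\Delta(V_\bullet^{\lambda t+(1-\lambda)t'})
$$
for $\lambda=m/(m+m')\in\Q\cap[0,1]$, and the Brunn--Minkowski inequality on convex bodies then yields
$$
\vol(L,v\ge\lambda t+(1-\lambda)t')^{1/n}\ge\lambda\,\vol(L,v\ge t)^{1/n}+(1-\lambda)\,\vol(L,v\ge t')^{1/n}.
$$
Extending from rational to all $\lambda\in[0,1]$ is then automatic by monotonicity and a standard density argument.

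Monotonicity of $t\mapsto\vol(L,v\ge t)$ is immediate from $V_m^{t'}\subset V_m^t$ when $t\le t'$. For constancy on $(-\infty,a_{\min}(\|L\|,v)]$, note that if $t<a_{\min}(\|L\|,v)=\inf_m m^{-1}a_{\min}(mL,v)$, then $mt<a_{\min}(mL,v)$ for all $m$, so every non-zero $s\in H^0(mL)$ satisfies $v(s)>mt$, giving $V_m^t=H^0(mL)$ and hence $\vol(L,v\ge t)=\vol(L)$. Passing to $t=a_{\min}(\|L\|,v)$ follows from continuity of the concave function $\vol(L,v\ge\cdot)^{1/n}$ on the open interval $(-\infty,a_{\max}(\|L\|,v))$. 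The main hurdle is the first step: verifying that $V_\bullet^t$ contains an ample series, so that the general Okounkov body formalism applies and delivers both the limit and its convex-geometric interpretation. Once this is in place, concavity and the identification of the support follow formally from Brunn--Minkowski together with the multiplicative structure of the filtration.
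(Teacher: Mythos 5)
Your proof follows essentially the same route as the paper: exhibit $V_\bullet^t$ as a graded linear subseries, verify the ample-series condition, invoke \cite[Proposition 2.1]{LM} for existence and the Okounkov-body formula, and deduce concavity from the multiplicative inclusion of filtered pieces together with Brunn--Minkowski. The only difference is one of attribution versus direct verification: the paper simply cites \cite[Lemma 1.6]{BC} for the ample-series property of $R(L,v\ge t)$ when $t<a_{\max}(\|L\|,v)$, whereas you sketch the underlying argument yourself. Your sketch has a small degree mismatch that deserves to be fixed before the ample-series condition is truly verified: multiplying $H^0(mL)$ by $\sigma^k$ lands in $V^t_{m+km_1}$, not $V^t_m$, so this as stated only gives a positive lower bound on $\liminf_N N^{-n}\dim V^t_N$ rather than the inclusion $H^0(mA)\subset V^t_m$ required by condition (C) of \cite[\S 2]{LM}. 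The clean fix, which is what \cite[Lemma 1.6]{BC} does, is to set $E:=\tfrac{1}{m_1}\divv(\sigma)$ (so $E\sim_\Q L$ with $v(E)>t$), choose rational $\eps\in(t/v(E),1)$, decompose $(1-\eps)L\sim_\Q A+F$ with $A$ ample and $F$ effective, and observe that multiplication by the canonical section of $m(F+\eps E)$ embeds $H^0(mA)$ into $\cF_v^{m\eps v(E)}H^0(mL)\subset V^t_m$ for sufficiently divisible $m$. Your treatment of monotonicity and of constancy on $(-\infty,a_{\min}(\|L\|,v)]$ is correct and fills in steps the paper leaves implicit.
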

\begin{proof} If we introduce as in \cite{BC} the graded algebra
$$
R(L,v\ge t):=\bigoplus_{m\in\N}\cF^{ mt}_v H^0(mL),
$$
then $R(L,v\ge t)$ contains an ample series for each $t<a_{\max}(\|L\|,v)$, by \cite[Lemma 1.6]{BC}. The existence of the limit is thus a consequence of \cite[Proposition 2.1]{LM}, which further shows that
$$
\vol(L,v\ge t)=n!\vol\left(\D(L,v\ge t)\right)
$$
where $\D(L,v\ge t)$ denotes the Okounkov body of $R(L,v\ge t)$ with respect to any fixed flag of subvarieties of $X$. Using that
$$
\cF^t_v H^0(mL)\cdot \cF^{t'}H^0(m' L)\subset \cF^{t+t'}_v H^0((m+m')L),
$$
it is easy to check that
$$
(1-\la)\D(L,v\ge t)+\lambda\D(L,v\ge t')\subset \D(L,v\ge (1-\lambda)t+\lambda t')
$$
for all $t,t'\in\R$ and $0\le\lambda\le 1$ (compare \cite[(1.6)]{BC}), and hence
$$
\vol\left(\D(L,v\ge(1-\lambda)t+\lambda t')\right)^{1/n}\ge(1-\la)\vol\left(\D(L,v\ge t)\right)^{1/n}+\la \vol\left(\D(L,v\ge t'\right)^{1/n}
$$
by the Brunn-Minkowski inequality. This shows as desired that $\vol(L,v\ge t)^{1/n}$ is a concave function of $t<a_{\max}(\|L\|,v)$.
\end{proof}

\begin{proof}[Proof of Theorem \ref{thm:equid}] For each $m\in\N$ define $h_m:\R\to\R$ by
$$
h_m(t):=\frac{1}{N_m}\dim\cF^{kt} H^0(mL),
$$
which satisfies
$$
-\frac{d}{dt} h_m=\nu_m
$$
by (\ref{equ:der}). If $h:\R\to\R$ is defined by
$$
h(t)=\frac{\vol(L,v\ge t)}{\vol(L)}
$$
for $t<a_{\max}(\|L\|,v)$ and $h(t)=0$ for $t\ge a_{\max}(\|L\|,v)$, then we get $h_m(t)\to h(t)$ for all $t\ne a_{\max}(\|L\|,v)$, using Lemma \ref{lem:vol} and the fact that $h_m(t)=0$ for all $m$ and all $t>a_{\max}(\|L\|,v)$. Since $0\le h_m\le 1$ is uniformly bounded, the dominated convergence theorem implies that $h_m\to h$ holds in $L^1_\mathrm{loc}$ topology, and hence $-\frac{d}{dt} h_m=\nu_m$ converges weakly on $\R$ to
$$
\mu_{L,v}:=-\frac{d}{dt} h,
$$
which is necessarily a positive measure (as the weak limit of such measures), and is supported on $[a_{\min}(\|L\|,v),a_{\max}(\|L\|,v)]$ since $h$ is constant outside this interval. On $(-\infty,a_{\max}(\|L\|,v))$
$h^{1/n}$ is further concave, hence locally Lipschitz continuous, and it follows that $\mu_{L,v}$ has $L^\infty_{\mathrm{loc}}$ density with respect to Lebesgue measure on $(-\infty,a_{\max}(\|L\|,v))$.

If $v$ has linear growth, then all probability measures $\nu_m$ are supported in the fixed compact set $[0,a_{\max}(\|L,\|,v)]$, and it follows that the weak limit  $\mu_{L,v}$ also is a probability measure.

Otherwise, $a_{\max}(\|L\|,v)$ is infinite, and $h^{1/n}$ is thus concave and non-increasing on $\R$. Since it is also bounded below (by $0$), it has to be constant, which proves that $\mu_{L,v}=0$ is that case.
\end{proof}

For later use, we note:
\begin{lemma}\label{lem:dervol} If $v$ has linear growth, then
$$
\frac{\vol(L,v\ge t)}{\vol(L)}=\mu_{L,v}(x\ge t)
$$
for all $t<a_{\max}(\|L\|,v)$. In particular, $\mu_{L,v}$ has no atom at $a_{\max}(\|L\|,v)$ iff
$$
\lim_{t\to a_{\max}(\|L\|,v)_-}\vol(L,v\ge t)=0.
$$
\end{lemma}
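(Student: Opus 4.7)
The plan is to unwind the definition of $\mu_{L,v}$ from the proof of Theorem~\ref{thm:equid} and compare two cumulative distributions. Recall that $\mu_{L,v} = -dh/dt$ in the sense of distributions, where $h(t) = \vol(L,v\ge t)/\vol(L)$ for $t < a_{\max}(\|L\|,v)$ and $h \equiv 0$ on $[a_{\max}(\|L\|,v), \infty)$. Introduce the right-tail function $F(t) := \mu_{L,v}([t,\infty))$. A one-line Fubini argument shows that $F$ has the same distributional derivative $-\mu_{L,v}$: for any test function $\phi \in C_c^\infty(\R)$,
\[
\int_\R \phi'(t)\, F(t)\, dt = \int_\R d\mu_{L,v}(s) \int_{-\infty}^{s} \phi'(t)\, dt = \int_\R \phi\, d\mu_{L,v}.
\]

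Consequently $h - F$ is a constant. Since $\mu_{L,v}$ is supported in $[a_{\min}(\|L\|,v), a_{\max}(\|L\|,v)]$ by the linear growth hypothesis and Theorem~\ref{thm:equid}(ii), both functions vanish identically on $(a_{\max}(\|L\|,v), \infty)$, so this constant is zero and $h = F$ almost everywhere. To promote this to a pointwise identity on $(-\infty, a_{\max}(\|L\|,v))$, I will use that $h$ is continuous on that interval (by Lemma~\ref{lem:vol}, since $h^{1/n}$ is concave there), and that $F$ is continuous on the same interval as well because Theorem~\ref{thm:equid}(ii) forbids any atom of $\mu_{L,v}$ strictly below $a_{\max}(\|L\|,v)$. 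Two continuous functions that agree a.e.\ coincide everywhere, which yields the first formula.

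The ``in particular'' statement is then immediate: since $F$ is left-continuous and $\mu_{L,v}((a_{\max}(\|L\|,v), \infty)) = 0$, the atom of $\mu_{L,v}$ at $a_{\max}(\|L\|,v)$ equals $F(a_{\max}(\|L\|,v)^-) = \lim_{t \to a_{\max}(\|L\|,v)_-} h(t)$ by the first part, so it vanishes precisely when $\lim_{t \to a_{\max}(\|L\|,v)_-}\vol(L,v\ge t) = 0$. I do not anticipate any real obstacle; the only point needing care is the bookkeeping of one-sided limits at the single possible discontinuity $a_{\max}(\|L\|,v)$ (where $h$ is right-continuous and $F$ is left-continuous), but everything matches up as above.
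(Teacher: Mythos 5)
Your proof is correct, and it supplies exactly the argument the paper leaves implicit (the lemma is stated with no proof, on the grounds that it follows directly from the construction of $\mu_{L,v}$ as the weak derivative $-dh/dt$ in the proof of Theorem~\ref{thm:equid}). The Fubini computation identifying the distributional derivative of the right-tail function $F(t)=\mu_{L,v}([t,\infty))$ with $-\mu_{L,v}$, the observation that $h$ and $F$ both vanish on $(a_{\max}(\|L\|,v),\infty)$ so the constant $h-F$ is zero, and the promotion from a.e.\ equality to pointwise equality on $(-\infty,a_{\max}(\|L\|,v))$ via continuity of $h$ (concavity of $h^{1/n}$, Lemma~\ref{lem:vol}) and of $F$ (no atom below $a_{\max}(\|L\|,v)$, Theorem~\ref{thm:equid}(ii)) are all sound, as is the bookkeeping for the final equivalence.
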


\subsection{The limit measure for Abhyankar valuations in characteristic zero}
In this section we assume that the base field $k$ has characteristic $0$, in order to rely on resolution of singularities.

Recall that the \emph{restricted volume} of a line bundle $L$ on a subvariety $Y\subset X$ is defined as
$$
\vol_{X|Y}(L):=\limsup_{m\to\infty}\frac{d!}{m^d}h^0(X|Y,mL),
$$
where $d:=\dim Y$ and $h^0(X|Y,mL)$ is the rank of the restriction map $H^0(X,mL)\to H^0(Y,mL)$.

\begin{thm}\label{thm:div} Assume that $k$ has characteristic zero. Let $v$ be a divisorial valuation and $L$ be a big line bundle on $X$. By birational invariance and homogeneity, we may assume wlog that $X$ is smooth and $v=\ord_E$ with $E\subset X$ a prime divisor, and we then have
$$
a_{\max}(\|L\|,v)=\sup\left\{t>0\mid L-t E\text{ big}\right\}
$$
and
$$
\mu_{L,v}=\frac{n\vol_{X|E}\left(L-t E\right)}{\vol(L)}dt.
$$
\end{thm}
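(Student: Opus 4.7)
The plan is to reduce the whole statement to the computation of $\vol(L,v\ge t)$ and then invoke differentiability of the volume function. The key observation is that for $v=\ord_E$, multiplying by the canonical section $\sigma_E$ of $\cO_X(E)$ identifies
\[
\cF^t_v H^0(mL) \;\simeq\; H^0\!\bigl(X,\,mL-\lceil t\rceil E\bigr).
\]
First I would fix rational $t=p/q < a_{\max}(\|L\|,v)$ and run along the subsequence $m=jq$, where $\lceil tm \rceil = jp$. Homogeneity of $\vol$ together with Lemma \ref{lem:vol} then gives
\[
\vol(L,v\ge t) \;=\; \lim_{j\to\infty}\frac{n!}{(jq)^n}\,h^0\!\bigl(j(qL-pE)\bigr) \;=\; \vol(L-tE).
\]
Both sides are continuous in $t$ (Lemma \ref{lem:vol} provides continuity of the left side via concavity on its interval of definition, and $\vol$ is continuous on $N^1(X)_\R$), so the identity $\vol(L,v\ge t)=\vol(L-tE)$ extends to all real $t<a_{\max}(\|L\|,v)$.

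Next I would use this identity to pin down $a_{\max}(\|L\|,v)$. By Lemma \ref{lem:vol} we have $\vol(L,v\ge t)>0$ for every $t<a_{\max}(\|L\|,v)$, hence $L-tE$ is big on that range, giving $a_{\max}(\|L\|,v)\le \sup\{t>0\mid L-tE\text{ big}\}$. Conversely, if $L-tE$ is big for a rational $t=p/q$, then $h^0(j(qL-pE))$ grows like $j^n$, so non-zero sections of $mL$ with $\ord_E\ge tm$ exist for $m=jq\gg 1$, giving $a_{\max}(\|L\|,v)\ge t$; openness of the big cone handles irrational values. This proves the first equality.

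For the density, I would feed the identity $\vol(L,v\ge t)=\vol(L-tE)$ into Lemma \ref{lem:dervol}: for $t<a_{\max}(\|L\|,v)$,
\[
\mu_{L,v}(x\ge t) \;=\; \frac{\vol(L-tE)}{\vol(L)}.
\]
At the endpoint, continuity of the volume function on $N^1(X)_\R$ and the fact that $L-a_{\max}(\|L\|,v)E$ lies on the boundary of the big cone yield $\lim_{t\to a_{\max}(\|L\|,v)^-}\vol(L-tE)=0$, so by Lemma \ref{lem:dervol} there is no Dirac mass at $a_{\max}(\|L\|,v)$. Consequently $\mu_{L,v}$ is absolutely continuous with density
\[
-\frac{1}{\vol(L)}\,\frac{d}{dt}\vol(L-tE).
\]
Finally, the differentiability of the volume function along divisorial directions from \cite{BFJ,LM} gives $\frac{d}{dt}\vol(L-tE)=-n\,\vol_{X|E}(L-tE)$ whenever $L-tE$ is big, producing the claimed formula. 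The main technical hurdle is the careful subsequence-plus-continuity argument establishing $\vol(L,v\ge t)=\vol(L-tE)$, since the ceiling $\lceil tm\rceil$ prevents a one-line identification, and the proof relies crucially on the a priori existence of the limit (Lemma \ref{lem:vol}) to upgrade a subsequential computation to a genuine equality.
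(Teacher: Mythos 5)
Your proof is correct and takes essentially the same route as the paper: establish $\vol(L,v\ge t)=\vol(L-tE)$, read off the formula for $a_{\max}(\|L\|,v)$ from bigness of $L-tE$, use continuity of the volume function together with Lemma \ref{lem:dervol} to rule out the Dirac mass, and then apply the differentiability theorem of \cite{BFJ,LM}. The only difference is that you are more careful than the paper about the ceiling issue in identifying $\cF_v^{mt}H^0(mL)$ with $H^0(mL-\lceil mt\rceil E)$, resolving it via a subsequence-plus-continuity argument where the paper simply writes $H^0(m(L-tE))$; this is a genuine refinement of the paper's slightly imprecise notation, not a different approach.
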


\begin{proof} In the present case we have $\cF_v^t H^0(mL)\simeq H^0(m(L-tE))$, hence
$$
0<\vol(L,v\ge t)=\vol(L- t E)
$$
for $t<a_{\max}(\|L\|,v)$, which proves that
$$
a_{\max}(\|L\|,v)\le\sup\left\{t>0\mid L-t E\text{ big}\right\}.
$$
Conversely, for each $t>0$ such that $L-t E$ is big we have
$$
\cF_v^t H^0(X,mL)\simeq H^0(m(L- t E)\ne 0
$$
for all $m\gg 1$, which implies that $t\le a_{\max}(\|L\|,v)$.

By continuity of the volume function \cite{PAG}, we thus have
$$
\lim_{t\to a_{\max}(\|L\|,v)} \vol(L,v\ge t)=0,
$$
which proves that $\mu_{L,v}$ is absolutely continuous with respect to Lebesgue measure by Lemma \ref{lem:dervol}. On $(-\infty,a_{\max}(\|L\|,v))$, $\mu_{L,v}$ is the weak derivative of
$$
-\frac{\vol(L,v\ge t)}{\vol(L)}=-\frac{\vol(L- t E)}{\vol(L)}.
$$
The result now follows from the differentiability property of the volume function \cite[Corollary C]{BFJ}, \cite[Corollary C]{LM}.
\end{proof}

For a general Abhyankar valuation we prove:
\begin{proposition} Let $v$ be an Abhyankar valuation, and $L$ be a big line bundle on $X$. Then $\mu_{L,v}$ is absolutely continuous with respect to Lebesgue measure.
\end{proposition}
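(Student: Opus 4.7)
By Theorem~A(ii), the singular part of $\mu_{L,v}$ can only be a Dirac mass at $a := a_{\max}(\|L\|,v)$, so by Lemma~\ref{lem:dervol} the desired absolute continuity is equivalent to
\[
V^* \;:=\; \lim_{t \to a^-} \vol(L, v \ge t) \;=\; 0.
\]
Since $v$ is Abhyankar in characteristic zero it is quasimonomial, and by Lemma~\ref{lem:birinv} I may pass to a birational model on which $v$ is monomial at a regular point $\xi \in X$ with parameters $(z_1,\dots,z_r)$ and real weights $\vec c \in \R^r_{>0}$. The plan is to deduce $V^* = 0$ from the divisorial case (Theorem~\ref{thm:div}) by rational approximation.

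Pick rational vectors $\vec c^{(k)} \ge \vec c$ componentwise with $\vec c^{(k)} \to \vec c$ and $c_i^{(k)} \in \tfrac{1}{N_k}\Z$; the monomial valuation $v_k$ with weights $\vec c^{(k)}$ has rank-one value group, hence is divisorial (being Abhyankar of rank one). Comparing the $\vec c^{(k)}$- and $\vec c$-minima in the monomial expansion at $\xi$ of any $s \in H^0(mL)$ yields $v(s) \le v_k(s) \le v(s) + \epsilon_k m$ with $\epsilon_k \to 0$, and hence the sandwich
\[
\vol(L, v \ge t) \;\le\; \vol(L, v_k \ge t) \;\le\; \vol(L, v \ge t - \epsilon_k). \qquad (\dagger)
\]
In particular $a \le a_k := a_{\max}(\|L\|, v_k) \le a + \epsilon_k$, so $a_k \to a$.

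By Theorem~\ref{thm:div} applied to each divisorial $v_k$, the function $t \mapsto \vol(L, v_k \ge t)$ is continuous on $(-\infty, a_k]$ and vanishes at $a_k$. Letting $t \to a^-$ in $(\dagger)$ and then $k \to \infty$ sandwiches $\lim_k \vol(L, v_k \ge a) = V^*$, so the task reduces to proving $\vol(L, v_k \ge a) \to 0$. Writing $v_k = b_k \ord_{F_k}$ on a model $X_k$, we have $\vol(L, v_k \ge a) = \vol(\pi_k^* L - (a/b_k) F_k)$; the differentiability of the volume function \cite{BFJ, LM} gives a first-order bound of $\vol(L, v_k \ge a)$ by $n((a_k - a)/b_k)$ times a restricted volume $\vol_{X_k|F_k}(\pi_k^* L - (a_k/b_k) F_k)$, plus a higher-order correction. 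The main obstacle is to make this bound uniform in $k$: this requires a Diophantine choice of $\vec c^{(k)}$ ensuring $(a_k - a)/b_k \to 0$ together with a uniform control of the restricted volumes as the birational models $X_k$ vary. A cleaner alternative would be to work directly on the Okounkov body: for a flag at $\xi$ adapted to the monomial structure of $v$, show that the concave transform $G_{L,v}$ is bounded above by the linear functional $\vec x \mapsto \sum_{i=1}^r c_i x_i$ whose maximum on $\Delta(L)$ equals $a$, whence $\{G_{L,v} = a\}$ lies in a face of $\Delta(L)$ and has Lebesgue measure zero.
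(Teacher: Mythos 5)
Your proposal sets up the same general strategy as the paper --- reduce to $V^* := \lim_{t\to a^-}\vol(L, v\ge t)=0$ via Lemma~\ref{lem:dervol}, pass to a model where $v$ is monomial, and approximate $v$ by divisorial monomial valuations with nearby rational weights --- but the direction of approximation you choose, and the bound you extract from it, are exactly what prevents the argument from closing.

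The paper chooses its rational perturbations $v_j$ so that $v_j \le v \le (1+\e_j)v_j$ on $\cO_{X,c_X(v)}$, a \emph{two-sided multiplicative} control, hence the filtration sandwich
$$\cF_{v_j}^{mt}\,\subset\,\cF_v^{mt}\,\subset\,\cF_{v_j}^{mt/(1+\e_j)}$$
and thus $g_j(t)\le g(t)\le g_j\bigl(t/(1+\e_j)\bigr)$ with $g_j:=\vol(L,v_j\ge\cdot)$, $g:=\vol(L,v\ge\cdot)$. The crucial consequence is that $a_{\max}(\|L\|,v_j)\le a$, so $g_j$ --- continuous and vanishing beyond $a_{\max}(\|L\|,v_j)$ by Theorem~\ref{thm:div} --- is being rescaled by a factor tending to $1$, and the paper deduces uniform convergence $g_j\to g$, hence continuity of $g$, hence $V^*=0$. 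The two-sidedness is also what gives the clean uniform bound $\bigl|G_{L,v}-G_{L,v_j}\bigr|\le\e_j\,a$ at the level of concave transforms.

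You instead take $\vec c^{(k)}\ge\vec c$, giving $v\le v_k$, an \emph{additive} estimate $v_k\le v+\e_k m$, and the sandwich $(\dagger)$ pointing the other way. As you observe, letting $t\to a^-$ and $k\to\infty$ in $(\dagger)$ yields only $\vol(L,v_k\ge a)\to V^*$: the limit of continuous nonnegative quantities equals the thing you want to show is zero, which is circular. Because here $a\le a_k:=a_{\max}(\|L\|,v_k)$, the point $a$ at which you evaluate $g_k$ lies in the region where $g_k$ is still positive, and Theorem~\ref{thm:div} gives you $g_k(a_k)=0$ but nothing about $g_k(a)$ without a uniform modulus of continuity of the $g_k$ near $a_k$ as the models $X_k$ degenerate. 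You correctly flag this as the main obstacle, but it is precisely the step your one-sided choice forfeits: choosing $\vec c^{(k)}\le\vec c$ with $\vec c^{(k)}\ge\vec c/(1+\e_k)$ instead puts you in the paper's situation. Your alternative sketch via the Okounkov body (bounding $G_{L,v}$ by a linear functional realizing $a$ on a face) is not carried out, and in any case the concave transform $G_{L,v}$ depends on the chosen flag, not merely on the monomial structure of $v$, so the claimed linear majorant is not justified. The proposal therefore has a genuine gap: fix it by switching to the two-sided approximation $v_j\le v\le(1+\e_j)v_j$ as in the paper.
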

\begin{proof} Arguing as in the proof of Lemma \ref{lem:dom} shows the existence of a sequence of divisorial valuations $v_j$ such that
$$
v_j\le v\le (1+\e_j)v_j
$$
on the local ring of $X$ at $c_X(v)$, for some sequence $\e_j\to 0$. It follows that
\begin{equation}\label{equ:filtr}
\cF_{v_j}^{ m t} H^0(mL)\subset\cF_v^t H^0(mL)\subset \cF_{v_j}^{m t(1+\e_j)^{-1}} H^0(mL)
\end{equation}
for all $m,j$ and all $t\in\R$. If we define $g_j:\R\to\R$ by $g_j(t)=\vol(L,v_j\ge t)$ if $t<a_{\max}(\|L\|,v_j)$ and $g_j(t)=0$ otherwise, then $g_j$ is continuous on $\R$ by Theorem \ref{thm:div}, and (\ref{equ:filtr}) easily implies that $g^j$ converges uniformly on $\R$ to the function $g:\R\to\R$ defined by $g(t)=\vol(L,v\ge t)$ for $t<a_{\max}(\|L\|,v)$ and $g(t)=0$ for $t\ge a_{\max}(\|L\|,v)$. In particular, $g$ is continuous, and we get as desired
$$
\lim_{t\to a_{\max}(\|L\|,v)}\vol(L,v\ge t)=0.
$$
\end{proof}

\begin{question} Is it true that $\mu_{L,v}$ is absolutely continuous for all real valuations $v$ with linear growth?
\end{question}

\section{The concave transforms of a valuation on the Okounkov body}\label{sec:ok}
\subsection{The concave transform of a filtration}
Let $X$ be a projective variety of dimension $n$. To a flag of subvarieties
$$
X=Y_0\supset Y_1\supset...\supset Y_n=\{p\}
$$
with $\codim Y_i=i$ and such that all $Y_i$'s are smooth at the (closed) point $p\in X$, one attaches a rank $n$ valuation $\nuf:k(X)^*\to\Z^n$ whose components are given by successive vanishing orders along the $Y_i$'s. Until further notice, we fix the choice of such a flag.

Given a graded subalgebra $R$ of the algebra of sections $R(L)$ of a line bundle $L$ on $X$, the \emph{Okounkov body} $\Delta(R)$ of $R$ is defined as the closure in $\R^n$ of
$$
\bigcup_{m\ge 1}\left\{m^{-1}\nuf(s)\mid s\in R_m\setminus\{0\}\right\}.
$$
It is a compact convex subset of $\R^n$, contained in the quadrant $\R_+^n$. We refer to \cite{LM,KK,Bou} for more information on this construction.

Assume now that $L$ is big, so that $\D(L)$ has non-empty interior, \ie is a \emph{bona fide} convex body. For each $m\in\N$, let $(\cF^t H^0(mL))_{t\in\R_+}$ be a decreasing filtration of $H^0(mL)$, and assume that the corresponding filtration $\cF$ of the graded algebra $R(L)$ is multiplicative, in the sense that
$$
\cF^t H^0(mL)\cdot\cF^s H^0(mL)\subset\cF^{t+s}H^0((k+m)L)
$$
for all $s,t\in\R_+$, $k,m\in\N$. For each $t\in\R_+$, one introduces as in \cite{BC} a graded subalgebra $R^t$ of $R(L)$ with graded pieces
$$
R^t_m:=\cF^{m t}H^0(mL).
$$
If $\cF$ is linearly bounded above, \ie if
$$
e_{\max}(\cF):=\sup_{m\ge 1}\left(m^{-1}\sup\left\{t\in\R_+\mid\cF^tH^0(mL)\ne 0\right\}\right)
$$
is finite, then it is shown in \cite{BC} that
$$
\Delta^t(L):=\Delta(R^t)\subset\D(L)
$$
is a convex body for each $t<e_{\max}(\cF)$. The \emph{concave transform} of $\cF$ is the concave usc function $G_\cF:\D(L)\to[0,+\infty)$ defined by
\begin{equation}\label{equ:G}
G_\cF(x):=\sup\left\{t\in\R_+\mid x\in\D^t(L)\right\}.
\end{equation}
By the main result of \cite{BC}, the push-forward by $G_\cF$ of the Lebesgue measure  describes the asymptotic distribution as $m\to\infty$ of the scaled jumping numbers of $\cF^t H^0(mL)$.

Specializing this to the filtration $\cF_v$ induced by a real valuation $v$ with linear growth we set $G_{L,v}:=G_{\cF_v}$, and call it the \emph{concave transform of the valuation $v$}. The limit measure $\mu_{L,v}$ in Theorem \ref{thm:equid} can now be described as the push-forward by $G_{L,v}$ of the Lebesgue measure $\la$ on $\D(L)$, normalized to mass $1$.

Recall from \cite[Proposition 4.1]{LM} that $\D(L)$ only depends on the numerical equivalence class of $L$. We similarly show:

\begin{proposition}\label{prop:num equiv} Let $L$ be a big line bundle on $X$. For any real valuation $v$, the limit measure $\mu_{L,v}$ only depends on the numerical equivalence class of $L$, and the same property holds for the concave transform
$$
G_{L,v} : \Delta(L)\to \R
$$
when $v$ has linear growth on $R(L)$.
\end{proposition}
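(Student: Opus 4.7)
First I would handle the case where $v$ does not have linear growth: by Theorem \ref{thm:equid}(iii), $\mu_{L,v}=0$ regardless of $L$, and $G_{L,v}$ is not defined, so the claim is immediate. Assume henceforth that $v$ has linear growth. Since $\mu_{L,v}$ is the push-forward of the normalized Lebesgue measure on $\Delta(L)$ by $G_{L,v}$, and $\Delta(L)$ only depends on the numerical class of $L$ by \cite[Proposition~4.1]{LM}, the statement reduces to showing numerical invariance of $G_{L,v}$. From the defining formula (\ref{equ:G}), this reduces in turn to proving that the filtered body $\Delta^t(L):=\Delta(R(L,v\geq t))$ depends only on the numerical class of $L$, for each $t<a_{\max}(\|L\|,v)$; the upper bound itself is numerically invariant by Proposition \ref{prop:num}.

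Suppose now $L\equiv L'$ are both big and set $Q:=L-L'$, a numerically trivial line bundle. My plan is to adapt the argument of \cite[Proposition~4.1]{LM} while tracking the $v$-filtration. Fix an ample class $A$ on $X$. For each $N\geq 1$, $A-NQ$ is ample (ample plus numerically trivial), and so $m(A-NQ)$ is base-point free for $m$ sufficiently large; choose $\sigma_m\in H^0(m(A-NQ))$ vanishing neither at the flag point $p$ nor at the center $c_X(v)$. Such sections exist because very ample line bundles admit sections avoiding any finite set of scheme points, and they automatically satisfy $\nuf(\sigma_m)=0$ and $v(\sigma_m)=0$. For any $s\in\mathcal{F}_v^{mNt}H^0(mNL)$, the product $s\cdot\sigma_m$ lies in $H^0(mNL+m(A-NQ))=H^0(m(NL'+A))$ -- the term $mNQ$ arising from $mNL=mNL'+mNQ$ cancels exactly -- has the same $\nuf$-valuation as $s$, and satisfies $v(s\sigma_m)=v(s)\geq mNt$. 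Taking closures and invoking the scaling identity $\Delta^{Nt}(ND)=N\Delta^t(D)$, extended to $\Q$-divisors, yields $\Delta^t(L)\subseteq\Delta^t(L'+A/N)$ for every $N\geq 1$.

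A symmetric ``ample perturbation'' construction (multiplying sections in $H^0(ML')$ by sections of $MA/N$ avoiding $p$ and $c_X(v)$) gives $\Delta^t(L')\subseteq\Delta^t(L'+A/N)$ as well, so both $\Delta^t(L)$ and $\Delta^t(L')$ lie in $\bigcap_N\Delta^t(L'+A/N)$; the desired equality $\Delta^t(L)=\Delta^t(L')$ then follows as soon as this intersection coincides with $\Delta^t(L')$, and the reverse inclusion is obtained by swapping $L$ and $L'$. This last step -- continuity of the filtered Okounkov body at interior points of the big cone -- is the main technical obstacle: it is the filtered analogue of \cite[Theorem~4.4]{LM}, and amounts to showing that $\vol(L,v\geq t)$ is continuous in the numerical class of $L$ for each fixed $t$, which I would establish using concavity of $\vol^{1/n}$ together with a Brunn--Minkowski argument in the spirit of Lemma \ref{lem:vol}.
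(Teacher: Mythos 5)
Your overall strategy is viable, but as written it contains a gap at precisely the step you flag as ``the main technical obstacle'', and the paper takes a cleaner route that avoids it. The paper, following \cite[Proposition~4.1]{LM}, uses the Fujita-type \cite[Lemma~2.2.42]{PAG}: there is a very ample $A$ such that $A+N$ is very ample for every nef $N$; since $P:=L'-L$ is numerically trivial, $A+lP$ is very ample for \emph{every} $l\in\Z$. One then tensors $s\in\cF^{kt}_v H^0(kL)$ by a fixed nonzero $\sigma\in H^0(mL-A)$ (which exists since $L$ is big) and by a section $\tau_k\in H^0(A+(k+m)P)$ chosen not to vanish at the flag center, so that $\nuf(\tau_k)=0$. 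The product lies in $H^0((k+m)L')$ with $\nuf$-value $\nuf(s)+\nuf(\sigma)$ and $v$-value at least $kt$ (since $v\geq 0$ on $\sigma,\tau_k$ anyway), and as $k\to\infty$ the fixed shift by $\nuf(\sigma)$ and the index shift by $m$ become negligible. This gives $\D(L,v\geq t)\subseteq\D(L',v\geq t)$ outright, and equality follows by replacing $P$ with $-P$. There is no $\varepsilon$ of ample, no $\bigcap_N$, and no continuity statement to establish.

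Your argument instead yields $\D^t(L)\subseteq\D^t(L'+A/N)$ for each $N$, and then hinges on $\bigcap_N\D^t(L'+A/N)=\D^t(L')$, which you reduce (correctly) to continuity of $\vol(L,v\geq t)$ in the numerical class of $L$ but do not prove. This is a genuine gap, though a fillable one: the inclusion $\D(L_1,v\geq t_1)+\D(L_2,v\geq t_2)\subseteq\D(L_1+L_2,v\geq t_1+t_2)$ together with Brunn--Minkowski shows, exactly as in Lemma~\ref{lem:vol}, that $(L,t)\mapsto\vol(L,v\geq t)^{1/n}$ is concave, hence continuous, on the open convex cone $\{L\text{ big},\, t<a_{\max}(\|L\|,v)\}\subset N^1(X)_\R\times\R$; the nested convex bodies $\D^t(L'+A/N)\supseteq\D^t(L')$ then converge in volume to $\vol(\D^t(L'))$, which forces the intersection to coincide with $\D^t(L')$. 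So your route can be completed, but it replaces the paper's one-shot use of the Fujita lemma with a Brunn--Minkowski continuity lemma that you would need to spell out and prove; the paper's approach is both shorter and avoids any appeal to continuity of filtered volumes.
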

\begin{proof}
Fix an arbitrary numerically trivial line bundle $P$ on $X$ and set $L':=L+P$. Following the train of thought of the proof of \cite[Proposition 4.1 (i)]{LM}, we will show that
$$
\D(L',v\ge t)=\D(L,v\ge t)
$$
for all $t\in\R_+$, which will yield both results.

By \cite[Lemma 2.2.42]{PAG}, there exists a very ample line bundle $A$ on $X$ such that $A+N$ is very ample for every nef line bundle $N$, and in particular $A+l P$ is very ample for all $l\in\Z$.

Since $L$ is big, we may find $m\gg 1$ and a non-zero section $\sigma\in H^0(mL-A)$. We write
$$
 (k+m)L'=mL+(mL-A)+(A+(k+m)P).
$$
 By very ampleness, for each $k$ we can find a section $\tau_m\in H^0(A+(k+m)P)$ that does not vanish at the center on $X$ of the flag valuation $\nuf$, so that $\nuf(\tau_k)=0$. For each $s\in\cF^{kt}_v H^0(mL)$, setting $s':=s\cdot\sigma\cdot\tau_k$ defines a section in $\cF^{kt}_v H^0((k+m)L')$, and we have
$$
\nuf(s')=\nuf(s)+\nuf(\sigma).
$$
It follows that
$$
\nuf\left(\cF^{kt}H^0(mL)\setminus\{0\}\right)+\nuf(\sigma)\subset\nuf\left(\cF^{kt}H^0((k+m)L)\setminus\{0\}\right),
$$
and hence
$$
\D\left(\bigoplus_{m\in\N}\cF^{kt}H^0(mL)\right)\subset\D\left(\bigoplus_{m\in\N}\cF^{kt}H^0(mL')\right).
$$
In other words, we have proved that $\D(L,v\ge t)\subset\D(L',v\ge t)$, and the result follows by symmetry.
\end{proof}

We now consider three examples where $G_{L,v}$ can be explicitely described.

\begin{example}[Curves]\rm If $X$ is a curve and $L$ is a big (hence ample) line bundle, then the Okounkov body with respect to any point $p\in X$ is the line segment $\D(L)=[0,\deg L]\subset\R$. For $v=\ord_q$ with $q\in X$, it is straightforward to check that the concave transform $G_{L,v}:[0,\deg L]\to\R$ is given by
$$
G_{L,v}(x)=x
$$
when $q=p$, and
$$
G_{L,v}(x)=\deg(L)-x
$$
otherwise.
\end{example}

We next consider a less trivial 2-dimensional example.

\begin{example}[Projective plane]\label{ex:ok f on p2}
   Set $L=\P^2$, $L=\calo(1)$, and consider the flag defined by a point $p$ on a line $\ell$. We then have
$$
\D(L)=\left\{(x,y)\in\R_+^2\mid x+y\le 1\right\}
$$
Let $v=\ord_z$ for a point $z\in X$. One can then check that
$$
G_{L,v}(x,y)=x+y
$$
for $z=p$, and
$$
G_{L,v}(x,y)=1-x
$$
otherwise.
\end{example}

\begin{example}[One-point blow-up of the projective plane]\label{ex:ok f on blow up of p2}
Let now $f:X=\Bl_q\P^2\to\P^2$ be the blow up of the projective plane
   in a point $q$, with exceptional divisor $F$. Let $p\in\ell\subset X$ be the flag given by taking the strict transform of a point on a line not passing through $q$.  We work with a $\Q$-divisor $L_{\lambda}=f^*H-\lambda F$ with $H=\mathcal{O}(1)$, for some fixed $\lambda\in\Q\cap[0,1]$. A direct computation using \cite[Theorem 6.2]{LM} gives that the Okounkov body of $L_\lambda$ has the shape
\begin{center}
\begin{pgfpicture}{0cm}{0cm}{6cm}{6cm}
   \pgfline{\pgfxy(0,1)}{\pgfxy(6,1)}
   \pgfline{\pgfxy(1,0)}{\pgfxy(1,6)}
   \pgfline{\pgfxy(1,5)}{\pgfxy(3,3)}
   \pgfline{\pgfxy(3,1)}{\pgfxy(3,3)}
   \pgfputat{\pgfxy(3,0.9)}{\pgfbox[center,top]{$1-\lambda$}}
   \pgfputat{\pgfxy(0.9,0.9)}{\pgfbox[right,top]{$0$}}
   \pgfputat{\pgfxy(0.9,5)}{\pgfbox[right,center]{$1$}}
   \pgfputat{\pgfxy(2.5,2)}{\pgfbox[right,center]{$\D(L_{\lambda})$}}
\end{pgfpicture}
\end{center}
   Consider the divisorial valuation $v=\ord_z$ attached to a point $z\in X$. For $z=p$, we find as before $G_{L,v}(x,y)=x+y$. Assume now that $z$ is a point
   not on the exceptional divisor $F$ (hence $z$ can be considered also
   as a point on $\P^2$) and not on the line through $p$ and $q$. We have now
   for $(x,y)\in\Delta(L_{\lambda})$
   $$
   G_{L,v}(x,y)=\left\{\begin{array}{ccc}
      1-x & \mbox{ for } & x+y\leq 1-\lambda\\
      2-2x-y-\lambda & \mbox{ for } & 1-\lambda\leq x+y\leq 1
      \end{array}\right.$$
  To see this, we may assume by continuity that $x,y\in\Q$. By construction, $G_{L,v}(x,y)$ is then the maximal vanishing order at $z$ of all effective $\Q$-divisors $D$ on $\P^2$ with $D\sim_{\Q}\mathcal{O}(1)$ and vanishing
   \begin{itemize}
      \item[a)] along $\ell$ to order $x$;
      \item[b)] in $q$ to order $\lambda$;
      \item[c)] in $p$ to order $x$ after dividing by the
      equation of $\ell$ in power $x$ and after restricting to $\ell$.
   \end{itemize}
   Condition a) ''costs'' $xH$, so we are left with
   $(1-x)H-\lambda F$ to take care of conditions b) and c).
   If $y\leq 1-x-\lambda$, then we take a line through the
   points $z$ and $q$ with multiplicity $\lambda$
   and the line through $z$ and $p$ with multiplicity $1-x-\lambda$.
   Their union has multiplicity $\lambda+(1-x-\lambda)=1-x$ at $q$
   and satisfies b) and c). Moreover, there is no $\Q$-divisor
   equivalent to $(1-x)H-\lambda F$ with
   higher multiplicity at $z$, which follows easily from
   B\'ezout's theorem intersecting with both lines.

   The argument in the remaining case $y>1-x-\lambda$ is similar.
   We want to split the divisor so that it produces a high
   vanishing order towards condition c) first and then,
   after arriving to the threshold
   \begin{equation}\label{eq:threshold}
      y'=1-x'-\lambda',
   \end{equation}
   we take again the union of two lines as above. Thus,
   we start with the conic through $q$ and $z$
   tangent to $\ell$ at $p$. We take this conic with
   multiplicity $\alpha$ subject to condition that
   $$y-2\alpha=1-x-2\alpha-(\lambda-\alpha),$$
   which means that the divisor $(1-x-2\alpha)H-(\lambda-\alpha)F$
   satisfies \eqnref{eq:threshold} with $y'=y-2\alpha$,
   $x'=x+2\alpha$ and $\lambda'=\lambda-\alpha$.
   The constructed $\Q$-divisor, consisting of the conic and two lines
   has then multiplicity
   $$x+y+\lambda-1+(1-x-2(x+y+\lambda-1))=2-2x-y-\lambda.$$
   B\'ezout's theorem shows then that there is no divisor of higher
   multiplicity.

\end{example}

\subsection{Continuity of concave transforms on Okounkov bodies}
We start by relating the continuity of concave transforms to the geometry of Okounkov bodies. Let $\D$ be a convex body in $\R^n$. The \emph{extremal function} of $\D$ at a point $p\in\D$ is the concave usc function $E_{\D,p}:\D\to[0,1]$ defined by
$$
E_{\D,p}(x)=\sup\left\{t\in[0,1]\mid x\in tp+(1-t)\D\right\}.
$$
It is elementary to check that the following properties are equivalent (see \cite[Proposition 3]{Howe}):
\begin{itemize}
\item[(i)] $\D$ is conical at $p$, in the sense that $\D$ coincides in a neighborhood of $p$ with a closed convex cone with apex $p$;
\item[(ii)] $E_{\D,p}$ is continuous at $p$;
\item[(iii)] every bounded concave usc function on $\D$ is continuous at $p$.
\end{itemize}
Further, $\D$ is conical at each of its (boundary) points iff it is a polytope. In particular, every concave usc function on a polytope is continous up to the boundary.

\begin{example} Since Okounkov bodies on surfaces are polygones \cite[Theorem B]{KLM}, all concave transforms on Okounkov bodies in dimension two are continuous.

In a similar vein, if $X$ is  a normal projective variety of arbitrary dimension and $L$ is a big line bundle with a finitely generated section ring $R(L)$, then by \cite[Theorem 1]{AKL12} the flag of subvarieties of $X$ can be chosen in such a way that
$\Delta(L)$ is a (rational) simplex. As a consequence, any concave transform on $\D(L)$ is again continuous.
\end{example}

\begin{lemma}[A non-continuity criterion]\label{lem:notcont}
Let $D\subset X$ be a big prime divisor. Then $G_{D,\ord_D}$ coincides with the extremal function of $\D(D)$ at $p=\nuf(D)$. In particular, $\D(D)$ is conical at $p$ iff $G_{D,\ord_D}$ is continuous at $p$.
\end{lemma}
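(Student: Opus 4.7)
The plan is to compute the slice bodies $\Delta^t(D)$ explicitly: I will show that for every $t\in[0,1]$ one has $\Delta^t(D)=tp+(1-t)\Delta(D)$, while $\Delta^t(D)=\emptyset$ for $t>1$. Once this is established, the definition~(\ref{equ:G}) of $G_{\cF}$ matches term-by-term the definition of the extremal function $E_{\Delta(D),p}$ recalled just above, yielding $G_{D,\ord_D}=E_{\Delta(D),p}$ at once.

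The key point is that $D$ is \emph{effective}, being a prime divisor, so the divisorial valuation $v=\ord_D$ comes together with a canonical section $\sigma_D\in H^0(\cO_X(D))$. Any nonzero $s\in H^0(mD)$ with $\ord_D(s)\geq k$ factors uniquely as $s=\sigma_D^{k}\otimes s'$ with $s'\in H^0((m-k)D)$; this requires $k\le m$ (since $H^0(-\ell D)=0$ for $\ell>0$), and yields $\cF_v^{mt}H^0(mD)=\sigma_D^{\lceil mt\rceil}\cdot H^0((m-\lceil mt\rceil)D)$ for $t\in[0,1]$, and $\cF_v^{mt}H^0(mD)=0$ for $t>1$. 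Applying multiplicativity of the flag valuation $\nuf$ on tensor products of sections gives $\nuf(\sigma_D^{\lceil mt\rceil}\otimes s')=\lceil mt\rceil\,p+\nuf(s')$ with $p=\nuf(\sigma_D)$; dividing by $m$, letting $s'$ range over $H^0((m-\lceil mt\rceil)D)\setminus\{0\}$ and letting $m\to\infty$ gives, after closure, the desired identity $\Delta^t(D)=tp+(1-t)\Delta(D)$. Note that $p=m^{-1}\nuf(\sigma_D^m)\in\Delta(D)$ for every $m$, so $p$ really is a point of the Okounkov body, making $E_{\Delta(D),p}$ well defined.

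Substituting into~(\ref{equ:G}) then yields $G_{D,\ord_D}(x)=\sup\{t\in[0,1]\mid x\in tp+(1-t)\Delta(D)\}=E_{\Delta(D),p}(x)$, and the ``In particular'' clause is exactly the equivalence (i)$\Longleftrightarrow$(ii) recorded in the preceding discussion of extremal functions. The only point needing a touch of care is the closure step in the previous paragraph: since $\lceil mt\rceil/m\to t$ and $(m-\lceil mt\rceil)/m\to 1-t$, the affine rescaling commutes cleanly with the $m\to\infty$ closure of the Okounkov construction, and I expect this to be routine rather than a real obstacle.
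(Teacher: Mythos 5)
Your proof is correct and follows essentially the same route as the paper's (very terse) argument: both rest on the factorization $\cF^t_{\ord_D}H^0(kD)\simeq\sigma_D^{\lceil t\rceil}\cdot H^0((k-\lceil t\rceil)D)$ and the resulting identity $\Delta^t(D)=tp+(1-t)\Delta(D)$ for $t\in[0,1]$, from which the equality $G_{D,\ord_D}=E_{\Delta(D),p}$ drops out of the definitions. You spell out the steps the paper leaves implicit (the multiplicativity of $\nuf$ on tensor products, the verification that $p\in\Delta(D)$, and the limiting/closure step), and these details are all sound.
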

\begin{proof} For all $t\in\R_+$ and $m\in\N$, we have
$$
H^0\left(\left(k-\lceil t\rceil\right)D\right)\simeq\cF^t_{\ord_D} H^0(kD).
$$
It follows easily that $a_{\max}(\|D\|,\ord_D)=1$ and
$$
\Delta^t(D)= tp+ (1-t)\D(D)
$$
for $t\in[0,1]$, hence the result.
\end{proof}

We will also use the following result, which is a consequence of \cite[Proposition 4.10]{Bou}:
\begin{lemma}\label{lem:oknef} Let $X$ be a normal projective variety of dimension $n$ and
$$
X=Y_0\supset ...\supset Y_n=\{p\}
$$
be a flag of subvarieties with $Y_{i+1}$ Cartier in $Y_i$ and such that each $Y_i$ with $i\ge 1$ has the property that every effective divisor on $Y_i$ is nef (this condition being automatic for $i=n-1$ and $n$). Let also $L$ be an \emph{ample} line bundle on $X$. Then $\D(L)$ and
$$
\bigcap_{i\ge 1}\left\{(x_1,...,x_n)\in\R_+^n\mid L|_{Y_i}-x_1Y_1|_{Y_i}-...-x_{i+1}Y_{i+1}\in\mathrm{Nef}(Y_i)\right\}
$$
coincide in the half-space $\{x_1\le a\}$ for $0<a\ll 1$.
\end{lemma}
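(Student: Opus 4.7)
The strategy is to iterate the fibre-wise description of Okounkov bodies along the flag, and then use the ampleness of $L$ together with the ``effective $\Rightarrow$ nef'' hypothesis on each $Y_i$ to replace a bigness condition by a nefness condition. Throughout I would write $L_0 := L$ and, for $1\le i\le n-1$, $L_i(x_1,\dots,x_i) := (L_{i-1}(x_1,\dots,x_{i-1}) - x_i Y_i)|_{Y_i}$, so that the condition appearing in the statement is simply $L_i(x_1,\dots,x_{i+1}) \in \mathrm{Nef}(Y_i)$ for each $i\ge 0$ (with the convention $L_0 - x_1 Y_1 \in \mathrm{Nef}(X)$ as the first condition).

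First, I would invoke \cite[Proposition 4.10]{Bou}: under the flag hypotheses, that proposition describes $\Delta(M)$, for any big $M$ on $X$, as the set of $(x_1,\dots,x_n)\in\R_+^n$ such that each of the iterated restrictions $L_i(x_1,\dots,x_{i+1})$ is pseudo-effective on $Y_i$ (this being meaningful because each $Y_i$ has $Y_{i+1}$ as a Cartier divisor, and because the ``effective $\Rightarrow$ nef'' hypothesis kills the potential discrepancy between graded linear series and their restricted versions). Under this hypothesis, pseudo-effectivity on $Y_i$ is the same as nefness (any effective $\R$-divisor is nef, and conversely a nef class on a projective variety whose effective classes are nef is a limit of nef $=$ effective nef classes, in particular pseudo-effective). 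So Proposition~4.10 already gives the desired characterization, but \emph{globally}, not only near $\{x_1=0\}$.

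Second, to recover the lemma itself (which only claims coincidence in $\{x_1\le a\}$ for some $0<a\ll 1$) I would use ampleness of $L$ to pick $a>0$ so small that $L - t_1 Y_1$ is ample on $X$, and inductively $L_i(t_1,\dots,t_{i+1})$ is ample on $Y_i$, whenever $0\le t_1\le a$ and $0\le t_{j+1}\le (L_j(t_1,\dots,t_j)\cdot Y_{j+1}\cdot A^{\dim Y_j-2})/\dots$ — concretely, it suffices to pick $a$ in an open neighbourhood of the origin in which the iterated perturbation keeps each $L_i$ inside the ample cone of $Y_i$. In this region, the nef condition automatically forces membership in the Okounkov body, and conversely every point of $\Delta(L)$ lying in $\{x_1\le a\}$ satisfies the nef conditions by the Proposition~4.10 description.

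The main obstacle, as I see it, is verifying that the ``effective $\Rightarrow$ nef'' hypothesis on each $Y_i$ really does allow one to pass from the graded linear series $W_{\bullet,m}$ on $Y_i$ obtained by restriction from $X$ to the full section ring of the class $L_i$ on $Y_i$, without introducing spurious fixed components that would shrink the Okounkov body. This is precisely the role of the hypothesis: any fixed divisor of the restricted series is effective on $Y_i$, hence nef, so its intersection with the iterated flag gives no lattice-point obstruction, and the closures of $\nuf$-images coincide. Once this step is granted (it is the content of \cite[Proposition 4.10]{Bou}), the rest is a direct application of openness of the ample cone to localize the identification to a half-space $\{x_1\le a\}$.
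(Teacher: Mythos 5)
The paper itself gives no proof of this lemma — it simply declares it to be ``a consequence of [Bou, Proposition 4.10]'' — so you and the paper agree on the main tool. However, your exposition contains an internal inconsistency that points to a real gap in understanding. In your first paragraph you assert that Proposition~4.10 already describes $\Delta(M)$, for an \emph{arbitrary} big $M$, globally as the locus where the iterated restrictions are pseudo-effective. If that were true, the lemma's localization to $\{x_1\le a\}$ would be vacuous, and your ``second step'' would have nothing to do. You notice this tension but do not resolve it.

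The resolution is that the ``effective $\Rightarrow$ nef'' hypothesis is imposed only on $Y_1,\dots,Y_n$, \emph{not on $X=Y_0$}; correspondingly, the intersection in the statement runs over $i\ge 1$ and contains no nef condition on $X$. Consequently the slicing of $\Delta(L)$ along $x_1$ is where the work lies: the fibre of $\Delta(L)$ over $x_1=t$ is a priori the Okounkov body of the \emph{restricted} graded linear series on $Y_1$, and one needs this to agree with the Okounkov body of the complete series of $(L-tY_1)|_{Y_1}$. This is where ampleness of $L$ and the constraint $x_1\le a$ enter: for $a\ll 1$, $L-tY_1$ stays ample, Serre vanishing gives surjectivity of $H^0(X,m(L-tY_1))\to H^0(Y_1,m(L-tY_1)|_{Y_1})$ for $m\gg 1$, and the first slicing is exact. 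Past $Y_1$, the ``effective $\Rightarrow$ nef'' hypothesis (forcing pseudo-effective $=$ nef and big $=$ ample on each $Y_i$) is what makes the remaining slicings work \emph{without} any restriction on $x_2,\dots,x_n$. Your second paragraph, which proposes to inductively shrink the allowed range of $t_2,t_3,\dots$ as well, is therefore misdirected — only $x_1$ needs localizing, and trying to bound the later coordinates would produce a smaller region than the lemma actually claims.
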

When $n=3$, the assumption reduces to the fact that the surface $Y_1$ contains no curve with negative self-intersection, and the nef cone of $Y_1$ is then (the closure of) one of the two connected components of the positive cone of the intersection form. If we require
\begin{itemize}
\item[(i)] $(Y_2^2)_{Y_1}>0$
\end{itemize}
then a numerical class $\alpha\in N^1(Y_1)$ is nef iff $(\alpha^2)_{Y_1}\ge 0$ and $(\alpha\cdot Y_2)_{Y_1}\ge 0$, and Lemma \ref{lem:oknef} therefore shows that $\D(L)$ coincides near the plane $(x_1=0)$ with the intersection of the quadrant $\R_+^3$, of the solid quadric
$$
q(x_1,x_2,x_3):=\left(L|_{Y_1}-x_1Y_1|_{Y_1}-x_2Y_2\right)_{Y_1}^2\ge 0
$$
and of the half-space
$$
x_1(Y_1\cdot Y_2)_X+x_2(Y_2^2)_{Y_1}+x_3\le(L\cdot Y_2)_X.
$$
If we further assume that the divisor class $Z:=L|_{Y_1}-Y_2$ on $Y_1$ satisfies
\begin{itemize}
\item[(ii)] $(Z^2)_{Y_1}=0$ and
\item[(iii)] $(Z\cdot Y_2)_{Y_1}>0$,
\end{itemize}
then $p=(0,1,0)$ lies in the interior of the above half-space, so that $\D(L)$ locally coincides near $p$ with $\R_+^3\cap\{q\ge 0\}$. Since we also have $q(p)=0$ and $\frac{\partial q}{\partial x_2}(p)=-2(Z\cdot Y_2)_{Y_1}$ is non-zero, we conclude that $\D(L)$ is not conical at $p$.

It remains to construct an example satisfying (i)--(iii) above and such that $L$ can be represented by a prime divisor $D\ne Y_1$ with
\begin{itemize}
\item[(iv)] $\ord_{Y_2}\left(D|_{Y_1}\right)=1$.
 \end{itemize}
Indeed, we can then set $Y_3$ to be any point of $Y_2$ not on $D|_{Y_1}$ to get $\nuf(D)=(0,1,0)=p$, and it will follow from the above discussion and Lemma \ref{lem:notcont} that $G_{D,\ord_D}$ is not continuous at $p$.

To guarantee that $D$ is prime, we will rely on the following simple criterion:
\begin{lemma}\label{lem:irred} Let $Y\subset X$ be smooth projective varieties such that the restriction map $N^1(X)\to N^1(Y)$ is injective. Let $D$ be an effective divisor on $X$ that doesn't contain $Y$ in its support and such that $D|_{Y}=E_1+E_2$ with $E_1,E_2$ prime divisors and $[E_1]\in N^1(Y)$ not in the image of $N^1(X)$. Then $D$ is a prime divisor.
\end{lemma}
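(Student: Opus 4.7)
The plan is to argue by contradiction. Suppose $D$ is not prime; then we can decompose $D = D_1 + D_2$ with $D_1$ and $D_2$ effective and both nonzero (if the support of $D$ contains more than one prime component, peel one off as $D_1$; if $D = aE$ with $a\ge 2$ and $E$ prime, take $D_1 = E$ and $D_2 = (a-1)E$). Since $Y$ is not contained in the support of $D$, it is not contained in the support of either $D_i$, so the restrictions $D_i|_Y$ are well-defined effective divisors on $Y$ satisfying
\[
D_1|_Y + D_2|_Y \;=\; D|_Y \;=\; E_1+E_2.
\]

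The next step is to check that neither $D_i|_Y$ is the zero divisor. Since $D_i$ is effective and nonzero, its numerical class $[D_i]\in N^1(X)$ is nonzero (for instance $D_i\cdot H^{n-1}>0$ for any ample class $H$, where $n=\dim X$). By the injectivity assumption on the restriction map $N^1(X)\to N^1(Y)$, the class $[D_i|_Y]\in N^1(Y)$ remains nonzero, so $D_i|_Y$ is itself a nonzero effective divisor on $Y$.

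Finally, apply unique factorization of effective divisors on the smooth variety $Y$: the only way to write $E_1+E_2$ as a sum of two nonzero effective divisors is as $\{E_1,E_2\}$ (including the degenerate case $E_1=E_2$, where both summands must equal $E_1$). In either case, one of the classes $[D_i|_Y]$ equals $[E_1]$, so $[E_1]$ lies in the image of $N^1(X)\to N^1(Y)$, contradicting the hypothesis on $E_1$. Hence no such decomposition of $D$ exists and $D$ is prime. The only subtle point is ensuring that both restrictions $D_i|_Y$ are nonzero, and this is exactly where the injectivity hypothesis on numerical classes is used.
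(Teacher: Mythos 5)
Your proof is correct and follows the same argument as the paper: decompose $D$ into two nonzero effective pieces, use injectivity of $N^1(X)\to N^1(Y)$ to see both restrictions to $Y$ are nonzero, and then match one of them to $E_1$ to get a contradiction. You spell out a few more details (the ample intersection test for nonvanishing of $[D_i]$ and the degenerate case $E_1=E_2$) than the paper does, but the approach is identical.
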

Note that the injectivity of $N^1(X)\to N^1(Y)$ is automatic if $Y$ is an ample divisor and $\dim X\ge 3$, by the Lefschetz hyperplane theorem.
\begin{proof} Assume by contradiction that $D=D'+D''$ with $D',D''$ non-zero effective divisors on $X$. Then $[D'],[D'']\in N^1(X)$ are non-zero since $X$ is projective, and $D'|_{Y},D''|_{Y}$ are non-zero as well by assumption. Since $E_1+E_2=D'|_{Y}+D''|_Y$, $E_1$ must coincide with $D'|_{Y}$, say, which contradicts the fact that $[E_1]$ is not the restriction of a class from $X$.
\end{proof}

\begin{example}[Proof of Theorem B] We work over $k=\C$. As in \cite{KLM}, we can use \cite{C} to obtain the existence of a smooth quartic
surface $S\subseteq\mathbb{P}^3$ without $(-2)$-curve (and hence such that every effective divisor is nef) and of two smooth irreducible curves $C,C'\subset S$ such that $C,C'$ and $\calo_S(1)$ generate the N\'eron-Severi group of $S$. Let $\pi:X\to\P^3$ be the blow-up along $C$, with exceptional divisor $E$, and denote by $Y_1$ the strict transform of $S$, so that $\pi$ induces an isomorphism $Y_1\simeq S$ under which $Y_1\cap E$ corrresponds to $C$. Since $N^1(X)$ is generated by $[E]$ and $[\pi^*\calo(1)]$, it follows that $N^1(X)\to N^1(Y_1)$ is injective, and that $[C']$ (viewed as a class on $Y_1$) is not in the image of $N^1(X)$.

Now let $L$ be any ample line bundle on $X$, ample enough to ensure that $L|_{Y_1}-C'$ is very ample and $H^0(L)\to H^0(Y_1,L|_{Y_1})$ is surjective. We can then choose a smooth irreducible curve $Y_2\in|L|_{Y_1}-C'|$ and an effective divisor $D\in|L|$ such that $D|_{Y_1}=C'+Y_2$. By Lemma \ref{lem:irred}, it follows that $D$ is prime, and (i)--(iv) are satisfied. This concludes the proof of Theorem B.

\end{example}



\bigskip \small

\bigskip\noindent
S{\'e}bastien Boucksom,
CNRS--Universit{\'e} Paris 6, Institut de Math{\'e}matiques, F-75251 Paris Cedex 05, France

\nopagebreak\noindent
\textit{E-mail address:} \texttt{boucksom@math.jussieu.fr}

\bigskip\noindent
   Alex K\"uronya,
   Budapest University of Technology and Economics,
   Mathematical Institute, Department of Algebra,
   Pf. 91, H-1521 Budapest, Hungary.

\nopagebreak\noindent
   \textit{E-mail address:} \texttt{alex.kuronya@math.bme.hu}

\medskip\noindent
   \textit{Current address:}
   Alex K\"uronya,
   Albert-Ludwigs-Universit\"at Freiburg,
   Mathematisches Institut,
   Eckerstra{\ss}e 1,
   D-79104 Freiburg,
   Germany.

\bigskip\noindent
   Catriona Maclean,
   Institut Fourier, CNRS UMR 5582   Universit\'e de Grenoble,
   100 rue des Maths,
   F-38402 Saint-Martin d'H\'eres cedex,  France

\nopagebreak\noindent
   \textit{E-mail address:} \texttt{catriona.maclean@ujf-grenoble.fr}

\bigskip\noindent
   Tomasz Szemberg,
   Instytut Matematyki UP,
   Podchor\c a\.zych 2,
   PL-30-084 Kram\'ow, Poland.

\nopagebreak\noindent
   \textit{E-mail address:} \texttt{tomasz.szemberg@uni-due.de}


\end{document}